\documentclass[11pt,reqno]{amsart}
\usepackage{amsmath, amsfonts, amsthm, amssymb,amscd, graphicx, amscd}
\usepackage{float,epsf}
\usepackage[english]{babel}
\usepackage{enumerate}
\usepackage{tikz}
\usepackage{mathrsfs}
\usepackage[numbers,sort&compress]{natbib}

\usepackage{srcltx}
\usepackage{geometry}
\usepackage{verbatim}
\usepackage{mathrsfs}
\usepackage{hyperref}
\usepackage{enumitem} % C: Added for different list labels
\usepackage{bbm} % C: For blackboard numbers
\usepackage{stmaryrd}

\usepackage{relsize}
\usepackage{exscale}

\usepackage{mathtools}

\newtheorem{thm}{Theorem}[section]

\newtheorem{prop}[thm]{Proposition}
\newtheorem{lem}[thm]{Lemma}

\newtheorem{rem}[thm]{Remark}

\numberwithin{equation}{section}

\def\dd{{\rm d}}
\hypersetup{bookmarksdepth=2}

\begin{document}
\title{The relativistic Euler-Poisson equation as a mean-field limit}
\author{Immanuel Ben-Porat}
\begin{abstract}
We adapt the renormalized energy method developed in \cite{duerinckx2020mean} in order to prove the monokinetic mean field limit for relativistic Newtonian dynamics. The resulting monokinetic PDE is the relativistic Euler-Poisson equation. Since the position evolves relativistically in comparision to the momentum, the kinetic part of the modulated energy has to adjusted, leading to further obstructions that are not present in the non-relativistic settings. A weak-strong stability principle for the relativistic Euler-Poisson equation is investigated, followed by a renormalization procedure leading to the mean-field limit. Our main result constitute the first mean-field limit for relativistic Coulomb flows.     
\end{abstract}
\maketitle
\section{Introduction}
\subsection{Main Objective.} Consider the relativistic $N$-body dynamics 
\begin{align}\tag{$N$r}
\begin{cases}
\begin{array}{lc}
     \dot X_{i}^
     {N}(t)=v(P_{i}^{N}(t)),\ X_{i}^{N}(0)=X_{i}^{0,N}  \\
     \dot P_{i}^{N}(t)=-\frac{1}{N}\sum_{j:j\neq i}\nabla V(X_{i}^{N}(t)-X_{j}^{N}(t)),\ P_{i}^{N}(0)=P_{i}^{0,N}.  
\end{array}
\end{cases}
\label{Relativistic N body dynamics}
\end{align}
The pair $(X_{i},P_{i})$ represents the position and momentum of the $i$-th particle, the function $v$ is the relativistic velocity given by 
\begin{align}
v(P)=\frac{P}{\sqrt{1+\left\vert P\right\vert^{2}}} \label{def of v}    
\end{align}
and $V$ is the $d$-dimensional ($d\geq 2$) periodic repulsive Coulomb interaction, i.e. the unique solution of 
\begin{align*}
-\Delta V=\delta_{0}-1, \ \int_{\mathbb{T}^{d}}V(x)\ \dd x=0.     
\end{align*}
Our ultimate goal is to prove the monokinetic mean-field limit of \eqref{Relativistic N body dynamics}, i.e. to derive the \textit{relativistic pressureless Euler-Poisson equation}:
\begin{align}\tag{EPr}
\begin{cases}
\begin{array}{lc}
\partial_{t}\rho+\mathrm{div}(\rho v(p))=0, \ \rho(0,\cdot)=\rho^{0}
       \\
\partial_{t}p+v(p)D_{x}p=-\nabla V\ast \rho,\ p(0,\cdot)=p^{0}.        
\end{array}  
\end{cases}
\label{Relativistic Euler intro}
\end{align}  
The unknowns are a time-dependent probability density $\rho(t,\cdot)\in \mathcal{P}(\mathbb{T}^{d})$ and a time-dependent momentum field $p(t,\cdot):\mathbb{T}^{d}\rightarrow \mathbb{R}^{d}$. The convention for the definition of the Jacobian is $(D_{x}p)_{ij}=(\frac{\partial p^{j}}{\partial x_{i}})_{ij}$. 
This derivation is to be understood in terms of the spatial empirical measure $\rho_{N}$ and the empirical momentum $q_{N}$: Given a solution $(\mathbf{X}_{N}(t),\mathbf{P}_{N}(t))=\left(X_{1}(t),\dots,X_{N}(t),P_{1}(t),\dots, P_{N}(t)\right)$ to the ODE system \eqref{Relativistic N body dynamics} we define $\rho_{N}$ by 
\begin{align}
\rho_{N}(t,\dd x)=\frac{1}{N}\sum_{i=1}^{N}\delta_{x=X_{i}(t)} \label{spatial empirical measure def}    
\end{align}
and $q_{N}(t,\dd x)$ by 
\begin{align}
q_{N}(t,\dd x)=\frac{1}{N}\sum_{i=1}^{N}\delta_{x=X_{i}(t)}P_{i}(t). \label{empirical momentum def}  \end{align}
We seek a weak convergence of the form $\rho_{N}(t,\cdot)\underset{N\rightarrow \infty}{\rightharpoonup}\rho(t,\cdot)$ and $q_{N}(t,\cdot)\underset{N\rightarrow \infty}{\rightharpoonup} \rho p(t,\cdot)$. 
\subsection{Background} Approximating large particle systems by macroscopic dynamics is a longstanding problem going back to the seminal work of Braun-Hepp \cite{braun1977vlasov} and Dobrushin \cite{dobrushin1979vlasov}. In more detail, the work of Dobrushin 
proves that the Vlasov equation 
\begin{align}\tag{V}
\partial_{t}f+\xi\cdot\nabla_{x}f-\nabla V\ast \rho_{f}\cdot \nabla_{\xi}f=0, \ \rho_{f}(t,x)=\int_{\mathbb{R}^{d}}f(t,x,\xi)\ \dd \xi \label{Vlasov equation}
\end{align}
can be obtained as a mean-field limit from the $N$-body dynamics 
\begin{align}
\begin{cases}
\begin{array}{lc}
    \dot X_{i}^{N}(t)=\Xi_{i}^{N}(t), \ X_{i}^{N}(0)=X_{i}^{0,N} &  \\
     \dot \Xi_{i}^{N}(t)=-\frac{1}{N}\sum_{i=1}^{N}\nabla V(X_{i}^{N}(t)-X_{j}^{N}(t)), \ \Xi_{i}^{N}(0)=\Xi_{i}^{0,N}.&  
\end{array}
\end{cases}
\label{N body nonrel}
\end{align}
The mean-field limit is to be understood in term of the empirical measure $\mu_{N}(t,\dd x\dd \xi)\coloneqq \frac{1}{N}\sum_{i=1}^{N}\delta_{x=X_{i}(t)}\otimes \delta_{\xi=\Xi_{i}(t)}$ and the Wasserstein distance, i.e. one ultimately seeks a statement of the form: 
\begin{align*}
W_{1}(\mu_{N}(0),f(0,\cdot))\underset{N\rightarrow \infty}{\rightarrow} 0\Rightarrow \underset{t\in [0,T]}{\sup}W_{1}(\mu_{N}(t,\cdot),f(t,\cdot))\underset{N\rightarrow \infty}{\rightarrow}0.  \end{align*}
The influential work of Dobrushin has the limitation that it is valid only for interactions $V$ enjoying $C^{1,1}$ regularity, which excludes Coulombic singularities. When $V$ is Coulomb it is customary to refer to \eqref{Vlasov equation} as the Vlasov-Poisson equation. So, otherwise put the mean-field limit for the Vlasov-Poisson is well beyond the scope of Dobrushin's mean-field limit result.   
Since the work of Dobrushin there has been a large amount of literature devoted to the mean-field limit for interactions with Riesz/Coulomb singularity as well as interactions exhibiting mild regularity/roughness. The $1$-dimensional Vlasov-Poisson equation has been derived as a mean-field limit from \eqref{N body nonrel} in \cite{hauray2012mean}. In higher dimensions, the case of interactions exhibiting sub-Coulombic singularities has been addressed in \cite{hauray2007n,hauray2015particle}. Mean-field limits with cutoffs, i.e. with a $N$-regularization incorporated in \eqref{N body nonrel} which disappears as $N\rightarrow \infty$, have been proved in \cite{lazarovici2016vlasov,lazarovici2017mean,FeistlHeldPickl2025}.  Although the problem of deriving the Vlasov-Poisson equation as a non-cutoff mean-field limit remains open in arbitrary dimensions, there has been recent promising progress which resolves the $2D$ case \cite{bresch2025new,duerinckx2026derivation} and the short time viscous $3D$ case \cite{FengWang2026}. These works make extensive use of relative entropy techniques. The method of the relative entropy has also been successfully applied to study the mean-field limit for interactions exhibiting roughness, see \cite{JabinWang2018, jabin2016mean}. Somewhat surprisingly, we are not aware of any works which consider the mean-field limit for relativistic Vlasov equations with singular interactions, although we mention the important work \cite{golse2012mean} which studies the mean-field limit of a regularized version of the relativistic Vlasov-Maxwell equation. We also refer to \cite{jabin2014review} for an exhaustive overview of the mean-field limit for Vlasov equations. 

\vspace{0.4 cm}

The Vlasov-Poisson equation is a kinetic PDE, but it is also possible to study the mean-field limit for monokinetic PDEs, which are obtained from \eqref{Vlasov equation} by introducing appropriate averaged quantities. The pioneering works \cite{duerinckx2020mean,bresch2020modulated} follow this route, and provide the derivation of the pressureless Euler-Poisson equation as a mean-field limit by invoking the \textit{renormalized modulated energy}. We also refer to \cite{carrillo2021mean} for the application of this method in the context of an Euler-Poisson equation with  nonlocal alignment terms, to \cite{rosenzweig2022mean, HessChildsRosenzweigSerfaty2026} for the extension of this method to transport fields with lower than Lipschitz regularity and to \cite{de2023sharp} for a uniform in time mean-field limit via the renormalized energy method.  Considering that the renormalized energy method  will be of particular importance for the present work, we briefly review the key insights of \cite{duerinckx2020mean}. First, recall that the (non-relativistic) Euler-Poisson equation reads 
\begin{align} 
\begin{cases}
\begin{array}{lc}
\partial_{t}\rho+\mathrm{div}(\rho u)=0, \ \rho(0,\cdot)=\rho^{0}\\\partial_{t}u+uD_{x}u=-\nabla V\ast \rho , \ u(0,\cdot)=u^{0}.  
\end{array} 
\end{cases}
\label{Euler Poisson}
\end{align}
Moreover, recall that \eqref{Euler Poisson} is a monokinetic version of the Vlasov-Poisson equation in the following sense: 
\begin{align*}
 \mbox{The monokinetic ansatz $\rho(t,x)\delta(\xi-u(t,x))$ solves \eqref{Vlasov equation} $\iff$ $(\rho,u)$ solves \eqref{Euler Poisson}}. \end{align*}
The first observation which paves the path to the derivation of \eqref{Euler Poisson} as a mean-field limit is that \eqref{Euler Poisson} satisfies a \textit{weak-strong stability principle}. By this we mean as follows: given a pair of solutions $(\rho_{1},u_{1}),(\rho_{2},u_{2})$ to \eqref{Euler Poisson} we define the (non-renormalized) modulated energy by  
\begin{align}
H(t)=2\int_{\mathbb{T}^{d}}\left\vert u_{1}-u_{2}\right\vert^{2}(t,x)\rho_{1}(t,x)\ \dd x+\int_{\mathbb{T}^{d}}V\ast (\rho_{1}-\rho_{2})(t,x)(\rho_{1}-\rho_{2})(t,x)\ \dd x. \label{nonrel Modulated energy}      
\end{align}
Then, a Gr\"onwall estimate on $H(t)$ reveals that it verifies the estimate $H(t)\leq e^{Ct}H(0)$ for a constant $C=C(\left\Vert D_{x}u_{2}\right\Vert_{\infty},\left\Vert \rho_{2}\right\Vert_{L^{\infty} L^{q}})$ (for some $1\leq q<\infty$). Crucially,  the constant $C$ involves the regularity of only one of the solutions, which suggests that this argument might be well adapted to the mean-field limit. This motivates the definition of the renormalized modulated energy, denoted by $H_{N}(t)$ and defined for solutions $(\mathbf{X}_{N}(t),\mathbf{\Xi}_{N}(t))$ and $(\rho,u)$ of \eqref{N body nonrel} and \eqref{Euler Poisson} respectively by 
\begin{align}
H_{N}(t)&=\underset{\mbox{kinetic part}}{\underbrace{\frac{2}{N}\sum_{i=1}^{N}\left\vert \Xi_{i}(t)-u(t,X_{i}(t))\right\vert^{2}}}+\underset{\mbox{interaction part}}{\underbrace{\int_{\Delta^{c}}V(x-y)\left(\rho_{N}-\rho\right)^{\otimes 2}(t,\dd x\dd y)+\mathcal{C}_{N}}}. \label{nonrel renormalized modulated energy}
\end{align}
In the above the definition we invoked the following notation: 
\begin{itemize}
    \item $\Delta^{c}$  designates the complement of the diagonal, i.e. $\Delta^{c}\coloneqq\{(x,y)\in \mathbb{T}^{d}\times \mathbb{T}^{d}\vert x\neq y\}$. More generally we set $\Delta_{N}^{c}\coloneqq \{(X_{1},\dots,X_{N})\vert \forall i\neq j: X_{i}\neq X_{j}\}$.
    \item $\rho_{N}(t,\dd x)$ is the spatial empirical measure centered at $\mathbf{X}_{N}(t)$, as defined in \eqref{spatial empirical measure def}. 
    \item $\mathcal{C}_{N}$ is the constant defined in Proposition \ref{non negativity}.
\end{itemize}
A lengthy calculation using \eqref{Euler Poisson} and \eqref{N body nonrel} shows that the time derivative of $H_{N}(t)$ is given by 
\begin{align*}
\frac{\dd}{\dd t}H_{N}(t)=&-\frac{2}{N}\sum_{i=1}^{N}D_{x}u(t,X_{i}(t)):(u(t,X_{i}(t))-\Xi_{i}(t))\otimes (u(t,X_{i}(t))-\Xi_{i}(t))\\
&+\int_{\Delta^{c}}(u(t,x)-u(t,y))\cdot \nabla V(x-y)(\rho_{N}-\rho)^{\otimes 2}(t,\dd x\dd y)\coloneqq\mathcal{D}_{1}(t)+\mathcal{D}_{2}(t).
\end{align*}
The term $\mathcal{D}_{1}(t)$ is readily seen to be bounded in terms of the kinetic part. The difficult term is the term $\mathcal
{D}_{2}(t)$ involving the interaction. The main novelty of \cite{duerinckx2020mean} is that $\mathcal{D}_{2}(t)$ can be bounded by means of the interaction part up to a negligible correction in $N$. More precisely, one has the following functional inequality:  
\begin{prop}\label{Commutator estimates} \textup{(Proposition 2.2, \cite{duerinckx2020mean})} \label{commutator estimate} 
Let $\rho\in L^{\infty}(\mathbb{T}^{d})\cap \mathcal{P}(\mathbb{T}^{d})$ and  $\mathbf{X}_{N}=(X_{1},\dots,X_{N})\in \Delta_{N}^{c}$.  Assume further that $u:\mathbb{T}^{d}\rightarrow \mathbb{R}^{d}$ is Lipschitz. Set $\rho_{N}=\frac{1}{N}\sum_{i=1}^{N}\delta_{X_{i}}$ and 
\begin{align}
\mathcal{V}_{N}(\rho_{N},\rho)\coloneqq\int_{\Delta^{c}} V(x-y)(\rho_{N}-\rho)^{\otimes 2}(\dd x\dd y). \label{def of VN}   
\end{align}
Then,
it holds that
 \begin{align*}
\bigg\vert \int_{\Delta^{c}
}\left(u(x)-u(y)\right) \cdot\nabla V(x-y)\left(\rho_{N}-\rho\right)^{\otimes2}(\dd x\dd y)\bigg\vert \\
\leq C\Big(\mathcal{V}_{N}(\rho_{N},\rho)+\frac{\log N}{N}\mathbf{1}_{d=2}+\frac{1}{N^{\frac{2}{d}}}\mathbf{1}_{d\geq3}\Big),
\end{align*}
where $C=C\left(d,\left\Vert u \right\Vert_{W^{1,\infty}},\left\Vert \rho\right\Vert_{\infty} \right)$. 
\end{prop}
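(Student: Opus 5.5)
The plan is the Serfaty-style argument: smear the point charges, rewrite the commutator through the stress--energy tensor, and control the errors by monotonicity in the smearing radius.

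\emph{Step 1: smeared charges and renormalized energy.} For $\eta>0$ let $\delta^{(\eta)}$ be the uniform probability measure on $\partial B(0,\eta)$. Set $H_{N}=V\ast(\rho_{N}-\rho)$ and $H_{N,\vec r}=V\ast\bigl(\frac1N\sum_i\delta^{(r_i)}_{X_i}-\rho\bigr)$, with truncation radii $r_i=\min\bigl(\frac14\min_{j\neq i}|X_i-X_j|,N^{-1/d}\bigr)$. Green's identity and Newton's theorem give
\begin{align*}
\int_{\mathbb{T}^d}|\nabla H_{N,\vec r}|^{2}
&=\mathcal{V}_N(\rho_N,\rho)+\frac{1}{N^{2}}\sum_i \mathsf{g}(r_i)\\
&\quad+O\Big(\frac{\|\rho\|_\infty}{N}\sum_i r_i^{2}\Big)-(\text{nonnegative pair terms}),
\end{align*}
where $\mathsf{g}$ is the whole-space Coulomb kernel. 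With $r_i\le N^{-1/d}$, the self-energy term contributes $\frac{\log N}{N}$ for $d=2$ and $N^{-2/d}$ for $d\ge3$. This yields the coercivity bound
\begin{align*}
\int|\nabla H_{N,\vec r}|^2+\frac1{N^2}\sum_{i}\mathsf g(r_i)\mathbf 1_{r_i<N^{-1/d}}\ \lesssim\ \mathcal V_N(\rho_N,\rho)+\frac{\log N}{N}\mathbf 1_{d=2}+N^{-2/d}\mathbf 1_{d\ge3}.
\end{align*}
Proposition \ref{non negativity} supplies exactly this lower bound, so I use it as the source of control.

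\emph{Step 2: the stress--energy identity.} For the smeared measure, symmetrize $(u(x)-u(y))\cdot\nabla V(x-y)$. Then
\begin{align*}
\iint (u(x)-u(y))\cdot\nabla V(x-y)\,\mu^{\otimes2}=-2\int u\cdot \nabla H\,\Delta H ,
\end{align*}
and integrating by parts with $\mathrm{div}\,T=\Delta H\nabla H$, where $T=\nabla H\otimes\nabla H-\frac12|\nabla H|^2 I$, turns this into $2\int Du:T$. The last expression is bounded by $C\|Du\|_\infty\int|\nabla H_{N,\vec r}|^2$. The diagonal self-interaction terms vanish by symmetry, and the periodic background $-1$ in $-\Delta V$ contributes a term bounded by $\|u\|_{W^{1,\infty}}$ times the same quantities.

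\emph{Step 3: comparing smeared and point charges.} I expect this to be the main obstacle. Write the difference between the commutator for $\rho_N$ (on $\Delta^c$) and the one for the smeared measure as a sum of terms of the form
\begin{align*}
\frac1N\sum_i\int\big(u(x)-u(X_i)\big)\cdot\nabla V(x-y)\,(\delta^{(r_i)}_{X_i}-\delta_{X_i})(\dd x)\,(\ldots)(\dd y),
\end{align*}
together with the cross terms against $\rho$ and against the other points. Since $u$ is Lipschitz, $|u(x)-u(X_i)|\le\|Du\|_\infty r_i$, which cancels one power of the singularity of $\nabla V$ near $X_i$. For the points $j$ with $|X_i-X_j|\ge4r_i$, Newton's theorem makes the smeared and point potentials coincide outside the balls, so only a Lipschitz remainder survives. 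For near neighbors, I bound the contributions by $\frac1{N^2}\sum_i\mathsf g(r_i)$-type terms, which are controlled by Step 1. The terms against $\rho$ are bounded by $\|\rho\|_\infty\frac1N\sum_i r_i^{2}\lesssim N^{-2/d}$, or by $\frac{\log N}{N}$ when $d=2$.

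Collecting Steps 1--3 gives the claimed estimate with $C=C(d,\|u\|_{W^{1,\infty}},\|\rho\|_\infty)$.
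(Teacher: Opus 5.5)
Your outline (truncation at the nearest-neighbour radii $r_i$, the stress--energy tensor, errors absorbed by the truncated energy) is the argument behind the result the paper imports from \cite{duerinckx2020mean}; the paper itself gives no proof. The estimate that carries everything, however, is not established. In Step~1 the balls $B(X_i,r_i)$ are disjoint, so the pair terms in your identity vanish and it reads $\int|\nabla H_{N,\vec r}|^2=\mathcal V_N+\frac1{N^2}\sum_i\mathsf g(r_i)+O(\cdot)$. Since $\mathsf g$ is decreasing, $r_i\le N^{-1/d}$ gives $\frac1{N^2}\sum_i\mathsf g(r_i)\ge\frac1N\mathsf g(N^{-1/d})$. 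So the self-energy is bounded \emph{below}, not above, by a multiple of $\frac{\log N}{N}$ resp.\ $N^{-2/d}$, and it blows up as two points merge. Hence the identity does not yield your coercivity bound (which is true, but needs its own proof). Proposition~\ref{non negativity} cannot supply it either: it only says $\mathcal V_N\ge-\mathcal C_N$ and gives no upper control of $\int|\nabla H_{N,\vec r}|^2$ or of $\frac1{N^2}\sum_i\mathsf g(r_i)$. The missing idea is the monotonicity argument at a \emph{uniform} radius $\eta=N^{-1/d}$. There the balls of close particles overlap, and since $\mathsf g*\delta^{(\eta)}*\delta^{(\eta)}\le\min(\mathsf g,\mathsf g(\eta))$, one gets
\[
\int|\nabla H_{N,\eta}|^2+\frac1{N^2}\sum_{i\ne j}\big(\mathsf g(X_i-X_j)-\mathsf g(\eta)\big)_+\le\mathcal V_N+\frac1N\mathsf g(\eta)+C(1+\|\rho\|_\infty)\eta^2 .
\]
Because $4r_i=\min_{j\ne i}|X_i-X_j|$ whenever $r_i<N^{-1/d}$, this close-pair sum dominates $\frac1{N^2}\sum_i\mathsf g(r_i)$ up to $O(\frac1N\mathsf g(\eta))$. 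Only after this does your variable-radius identity give $\int|\nabla H_{N,\vec r}|^2+\frac1{N^2}\sum_i\mathsf g(r_i)\lesssim\mathcal V_N+\frac{\log N}{N}\mathbf{1}_{d=2}+N^{-2/d}\mathbf{1}_{d\ge3}$.

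Two further steps also fail as written.

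In Step~2 the diagonal terms do not vanish. The integrand $(u(x)-u(y))\cdot\nabla V(x-y)$ is \emph{symmetric} under $x\leftrightarrow y$, not antisymmetric. For example, if $u(x)=x$ near $X_i$ and $d\ge3$, the $i$-th self-term equals $-(d-2)\mathsf g(r_i)/N^2$ up to $O(r_i^2/N^2)$. These terms have size $\|\nabla u\|_\infty\mathsf g(r_i)/N^2$ and must be absorbed through the self-energy bound above. Relatedly, $\nabla H_{N,\vec r}$ jumps across $\partial B(X_i,r_i)$, exactly where $\Delta H_{N,\vec r}$ is concentrated, so the traces must be averaged.

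In Step~3, treating far pairs one at a time does not close. The Lipschitz remainders sum to $\frac1{N^2}\sum_{i\ne j}\|\nabla u\|_\infty r_i|X_i-X_j|^{1-d}$, which is already of order $N^{-1/d}$ for a lattice. Likewise, the background piece $\frac1N\sum_i\int(u(x)-u(X_i))\cdot\nabla V*\rho(x)\,\delta^{(r_i)}_{X_i}(\dd x)$ is only $O(N^{-1/d})$ for Lipschitz $u$, not $O(\|\rho\|_\infty\frac1N\sum_ir_i^2)$. That smaller size holds only for the combined spherical-mean defects of $\nabla V*\rho$ and $\nabla V*(u\rho)$. These large pieces cancel only when grouped into the field of all other charges, $H^{(i)}=V*(\frac1N\sum_{j\ne i}\delta_{X_j}-\rho)$. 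The error then becomes $\frac1N\sum_i\int(u(x)-u(X_i))\cdot\nabla H^{(i)}(x)\,\delta^{(r_i)}_{X_i}(\dd x)$. Since $H^{(i)}$ is harmonic up to $\rho$ on $B(X_i,2r_i)$, an interior gradient estimate plus Cauchy--Schwarz over the disjoint balls $B(X_i,2r_i)$ bounds it by $C\|\nabla u\|_\infty\big(\int|\nabla H_{N,\vec r}|^2+\frac1{N^2}\sum_i\mathsf g(r_i)+\|\rho\|_\infty N^{-2/d}\big)$. This again rests on the Step~1 control.
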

With the aid of Proposition of \ref{Commutator estimates} it is possible to bound $\mathcal{D}_{2}(t)$ by means of the interaction part, thereby implying a Gr\"onwall estimate on $H_{N}(t)$, which in turn proves the mean-field limit.  
\subsection{Main new results.} Since its discovery, the renormalized modulated energy has been successfully  applied in a variety of different scenarios, including mean-field quantum limits \cite{golse2012mean}, hydrodynamical-mean-field limits \cite{han2021newton, rosenzweig2023rigorous}, mean-field limit for spray models \cite{menard2024mean} and mean-field limits for adaptive dynamics \cite{ben2026singular}.
The main aim of this work is to demonstrate that the renormalized modulated energy   developed in \cite{duerinckx2020mean} can be employed in the context of \textit{relativistic Newtonian dynamics}, witnessing once again the robustness of the method. In order to be able to carry out properly this method in the relativistic settings several conceptual adjustments must be implemented, and therefore our main result goes beyond being merely a technical improvement -- as will be clarified in \ref{subsec novelty}. First, as already explained, the renormalized modulated energy is modeled after a weak-strong stability principle. Therefore one has to first understand how the relativistic effect is incorporated within the weak-strong stability argument.
Given a vector field $p:\mathbb{T}^{d}\rightarrow \mathbb{R}^{d}$ set
\begin{align}
 \mathscr{K}(p)\coloneqq\sqrt{1+\left\vert p\right\vert^{2}}\label{def of rel kinetic part}   
\end{align}
and given vector fields $p_{1},p_{2}:\mathbb{T}^{d}\rightarrow \mathbb{R}^{d}$ set 
\begin{align}
\eta(p_{1}\vert p_{2})\coloneqq\mathscr{K}(p_{1})-\mathscr{K}(p_{2})-\nabla \mathscr{K}(p_{2})\cdot (p_{1}-p_{2}). \label{definition of eta intro}     
\end{align}
We introduce the following time dependent quantity, which is a relativistic version of the modulated energy \eqref{nonrel Modulated energy}: 
given a pair $(\rho_{1},p_{2}),(\rho_{2},p_{2})$ of solutions to \eqref{Relativistic Euler intro} define  
\begin{align}
\mathcal{E}(t)\coloneqq &2\int_{\mathbb{T}^{d}}\eta(p_{1}(t,x)\vert p_{2}(t,x))\rho_{1}(t,x)\ \dd x \notag\\
&+\int_{\mathbb{T}^{d}}V\ast (\rho_{1}-\rho_{2})(t,x)(\rho_{1}-\rho_{2})(t,x)\ \dd x \coloneqq\mathcal{K}(t)+\mathcal{V}(t). \label{rel modulated energy intro}   
\end{align}
Note that if we take $\mathscr{K}(p)=\left\vert p\right\vert^{2}$ then we recover the previously introduced non-relativistic modulated energy. Our first task is to prove a weak-strong stability principle for the relativistic Euler-Poisson equation, by obtaining a Gr\"onwall estimate for the relativistic modulated energy $\mathcal{E}(t)$, as summarized in the following theorem.  
\begin{thm} \
\label{weak strong stability intro}
Let $(\rho_{1},p_{1})$, $(\rho_{2},p_{2})$ be  solutions of 
\begin{align*}
\begin{cases}
&\partial_{t}\rho_{i}+\mathrm{div}(\rho_{i}v(p_{i}))=0, \ \rho_{i}(0,\cdot)=\rho_{i}^{0}\\
&\partial_{t}p_{i}+v(p_{i})D_{x}p_{i}=-\nabla_{x}V\ast \rho_{i},\ p_{i}(0,\cdot)=p_{i}^{0}. 
\end{cases}      
\end{align*}
Suppose that $(\rho_{2},p_{2})\in L^{\infty}([0,T];L^{a}(\mathbb{T}^{d}))\times L^{\infty}([0,T];W^{1,\infty}(\mathbb{T}^{d}))$ for some $a>d$. 
Let $\mathcal{E}(t)$ be given by \eqref{rel modulated energy intro}. Then, it holds that 
\begin{align}
\frac{\dd}{\dd t}\mathcal{E}(t)=&2\int_{\mathbb{T}^{d}} v(p_{2})\cdot \nabla V\ast (\rho_{1}-\rho_{2})(\rho_{1}-\rho_{2})\ \dd x \notag\\
&+2\int_{\mathbb{T}^{d}}\rho_{1}(v(p_{1})-v(p_{2}))\cdot ((p_{1}-p_{2})D_{x}(v(p_{2})))\ \dd x \notag\\
&+2\int_{\mathbb{T}^{d}}\rho_{1}\nabla V\ast \rho_{2}\left(\nabla^{2}\mathscr{K}(p_{2})(p_{1}-p_{2})-(\nabla \mathscr{K}(p_{1})-\mathscr{\nabla}\mathscr{K}(p_{2}))\right)\ \dd x. 
\label{statement of formula for der of H}
\end{align}
Consequently, there is some $C=C(a,d,\left\Vert p_{2}\right\Vert_{L^{\infty}W^{1,\infty}}, \left\Vert \rho_{2}\right\Vert_{L^{\infty}L^{a}})>0$ such that 
\begin{align}
\mathcal{E}(t)\leq e^{Ct}\mathcal{E}(0)\ \mbox{for all}\ t\in [0,T]. \label{statement of est on H}   \end{align}
\end{thm}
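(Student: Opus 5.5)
The proof has two parts: first the identity \eqref{statement of formula for der of H} by direct differentiation, then the Gr\"onwall bound \eqref{statement of est on H} by estimating each of the three resulting terms by $\mathcal{E}(t)$.

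\textbf{The identity.} Assume enough regularity to justify the computations; for weak solutions this is the usual modulated-energy bookkeeping. Write $\mathscr{K}(p)=\sqrt{1+|p|^{2}}$, so that $\nabla\mathscr{K}(p)=v(p)$ and $\nabla^{2}\mathscr{K}(p)=\frac{1}{\mathscr{K}(p)}\big(I-v(p)\otimes v(p)\big)$. Differentiate the kinetic part $\mathcal{K}(t)=2\int\eta(p_{1}|p_{2})\rho_{1}$ and use the continuity equation for $\rho_{1}$:
\begin{align*}
\frac{\dd}{\dd t}\mathcal{K}=2\int \rho_{1}\big(\partial_{t}+v(p_{1})\cdot\nabla\big)\eta(p_{1}|p_{2})\ \dd x .
\end{align*}
Next use
\begin{align*}
\partial_{p_1}\eta=v(p_{1})-v(p_{2}),\qquad \partial_{p_2}\eta=-\nabla^{2}\mathscr{K}(p_{2})(p_{1}-p_{2}),
\end{align*}
and the momentum equations written as $(\partial_{t}+v(p_{i})\cdot\nabla)p_{i}=-\nabla V\ast\rho_{i}$. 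For the $p_2$-derivative, also use $(\partial_t+v(p_1)\cdot\nabla)p_2=-\nabla V\ast\rho_2+((v(p_1)-v(p_2))\cdot\nabla)p_2$. This gives three kinds of terms:
\begin{itemize}
\item a force term $-2\int\rho_{1}(v(p_{1})-v(p_{2}))\cdot\nabla V\ast\rho_{1}$;
\item a second force term $2\int\rho_{1}\nabla^{2}\mathscr{K}(p_{2})(p_{1}-p_{2})\cdot\nabla V\ast\rho_{2}$;
\item a transport term $-2\int\rho_{1}\,\nabla^{2}\mathscr{K}(p_{2})(p_{1}-p_{2})\cdot\big((v(p_{1})-v(p_{2}))\cdot\nabla\big)p_{2}$.
\end{itemize}
By symmetry of $\nabla^{2}\mathscr{K}$ and the chain rule $D_{x}v(p_{2})=D_{x}p_{2}\,\nabla^{2}\mathscr{K}(p_{2})$, the transport term becomes the second line of \eqref{statement of formula for der of H}, up to sign conventions that must be checked.

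For the potential part, use the symmetry of $V$ and the continuity equations:
\begin{align*}
\frac{\dd}{\dd t}\mathcal{V}=2\int\nabla V\ast(\rho_{1}-\rho_{2})\cdot(\rho_{1}v(p_{1})-\rho_{2}v(p_{2})).
\end{align*}
Split $\rho_{1}v(p_{1})-\rho_{2}v(p_{2})=\rho_{1}(v(p_{1})-v(p_{2}))+(\rho_{1}-\rho_{2})v(p_{2})$. The first piece cancels the $\nabla V\ast\rho_{1}$ part of the first force term, leaving $+2\int\rho_{1}(v(p_{1})-v(p_{2}))\cdot\nabla V\ast\rho_{2}$. Combined with the second force term, this yields the third line, with $\nabla\mathscr{K}(p_{1})-\nabla\mathscr{K}(p_{2})=v(p_{1})-v(p_{2})$. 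The second piece gives the first line.

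\textbf{The Gr\"onwall estimate.} I bound each line of \eqref{statement of formula for der of H} by $C\,\mathcal{E}(t)$. The key fact is a coercivity of $\eta$ relative to $|p_{1}-p_{2}|^{2}$, but only with weights, since $\mathscr{K}$ is merely linearly growing. Because $p_{2}$ is bounded, one has
\begin{align*}
\eta(p_{1}|p_{2})\geq c\,\min(|p_{1}-p_{2}|^{2},|p_{1}-p_{2}|),\qquad c=c(\|p_{2}\|_{\infty}).
\end{align*}

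\emph{Second line.} Since $|v(p_{1})-v(p_{2})|\leq\min(2,|p_{1}-p_{2}|)$ (the map $v$ is $1$-Lipschitz and bounded by $1$), the integrand is bounded by $\|D_{x}v(p_{2})\|_{\infty}\min(|p_{1}-p_{2}|^{2},2|p_{1}-p_{2}|)\lesssim\eta$. This line is therefore $\leq C\mathcal{K}$.

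\emph{Third line.} The bracket $\nabla^{2}\mathscr{K}(p_{2})(p_{1}-p_{2})-(\nabla\mathscr{K}(p_{1})-\nabla\mathscr{K}(p_{2}))$ is a second-order Taylor remainder of $\nabla\mathscr{K}$. So it is $O(|p_{1}-p_{2}|^{2})$ for small differences and $O(|p_{1}-p_{2}|)$ for large ones, again $\lesssim\eta$. It remains to have $\nabla V\ast\rho_{2}\in L^{\infty}$. This follows from $\rho_{2}\in L^{a}$ with $a>d$, since $\nabla V\in L^{b}$ for every $b<d/(d-1)$ and Young's inequality applies. This line is therefore also $\leq C\mathcal{K}$.

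\emph{First line (the main obstacle).} This is a commutator-type term in which we must avoid regularity of $\rho_{1}$. I would use the standard stress–energy tensor identity. With $h=V\ast(\rho_{1}-\rho_{2})$, one has $-\Delta h=\rho_{1}-\rho_{2}$, since the mass terms cancel because both are probability densities. Then
\begin{align*}
\int v(p_{2})\cdot\nabla h\,(-\Delta h)=\int D_{x}v(p_{2}):\big(\nabla h\otimes\nabla h-\tfrac12|\nabla h|^{2}I\big),
\end{align*}
which is bounded by $C\|D_{x}v(p_{2})\|_{\infty}\int|\nabla h|^{2}=C\mathcal{V}$. For weak $\rho_{1}$ this needs justification by mollification.

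Summing the three bounds gives $\mathcal{E}'\leq C\mathcal{E}$, and Gr\"onwall's lemma yields \eqref{statement of est on H}.
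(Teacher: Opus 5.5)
Your proposal is correct and follows the paper's proof essentially step by step. It uses the same material-derivative computation of $\eta(p_1\vert p_2)$, the same splitting of $\rho_1 v(p_1)-\rho_2 v(p_2)$, the same stress-tensor integration by parts bounding the first line by $C\Vert D_x v(p_2)\Vert_\infty\mathcal{V}(t)$, and the same bounds $C\min\{\lvert p_1-p_2\rvert,\lvert p_1-p_2\rvert^2\}\le C'\eta(p_1\vert p_2)$ for the other two lines. The coercivity you assert without proof is exactly Lemma \ref{coercivity positivity of eta}. The paper proves it using the Hessian lower bound $\lambda_{\min}(\nabla^2\mathscr{K}(z))=(1+\lvert z\rvert^2)^{-3/2}$ on bounded sets, and, for large $\lvert p_1-p_2\rvert$, the linear bound $\eta(p_1\vert p_2)\ge\lvert p_1\rvert(1-\lvert v(p_2)\rvert)-1$.

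There are two sign points. First, after the cancellation the leftover term is $-2\int\rho_1(v(p_1)-v(p_2))\cdot\nabla V\ast\rho_2\,\dd x$, not $+$; this minus is what produces the Taylor remainder in the third line. Second, the minus sign on your transport term is correct and agrees with \eqref{Final calculation of evolution of kinetic part}, not with the $+$ displayed in the statement's second line. That discrepancy is harmless, since only absolute values enter the Gr\"onwall bound.
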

Once the weak-strong stability principle has been established we proceed by introducing the following \textit{relativistic renormalized modulated energy}. Let $(\mathbf{X}_{N}(t),\mathbf{P}_{N}(t))$ be a solution to \eqref{Relativistic N body dynamics} and let $(\rho,p)$ be a solution to \eqref{Relativistic Euler intro}. The relativistic renormalized modulated energy is denoted by $\mathcal{E}_{N}(t)$ and is defined by 
\begin{align}
\mathcal{E}_{N}(t)&=\underset{\coloneqq \mathcal{K}_{N}(t)}{\underbrace{\frac{2}{N}\sum_{i=1}^{N}    \eta(P_{i}(t)\vert p(t,X_{i}(t)))}}+\underset{\coloneqq \mathcal{V}_{N}(t)}{\underbrace{\int_{\Delta^{c}}V(x-y)\left(\rho_{N}(t,\cdot)-\rho(t,\cdot)\right)^{\otimes 2}(\dd x\dd y)}}+\mathcal{C}_{N}. \label{Renormalized relativistic modulated energy} \end{align}
As before, $\mathcal{C}_{N}$ is the constant defined in Proposition \ref{non negativity}. The kinetic part $\mathcal{K}_{N}(t)$ is non-negative due to the strict (though non-uniform) convexity of $\mathscr{K}$ (see Lemma \ref{coercivity positivity of eta}). The term $\mathcal{V}_{N}(t)+\mathcal{C}_{N}$ is non-negative due to Theorem \ref{non negativity}. The following theorem establishes the mean-field limit leading from \eqref{Relativistic N body dynamics} to \eqref{Relativistic Euler intro}, and constitutes the main result of this work.  
\begin{thm}
Suppose that: 
\begin{itemize}
    \item $(\mathbf{X}_{N}^{0},\mathbf{P}_{N}^{0})\in \Delta_{N}^{c}\times \mathbb{R}^{dN}$  and $(\mathbf{X}_{N}(t),\mathbf{P}_{N}(t))\in C^{1}([0,T];\mathbb{T}^{dN}\times \mathbb{R}^{dN})$ is a solution to \eqref{Relativistic N body dynamics}. 
    \item $(\rho^{0},p^{0})\in W^{1,a}(\mathbb{T}^{d})\times W^{2,a}(\mathbb{T}^{d})$ for some $a>d$ and $(\rho,p)\in C([0,T];W^{1,a}(\mathbb{T}^{d}))\times C([0,T];W^{2,a}(\mathbb{T}^{d}))$ is the unique solution to \eqref{Relativistic Euler intro} (as guaranteed via Theorem \ref{short time well posedness}).
    \item $\mathcal{E}_{N}(t)$ is given by \eqref{Renormalized relativistic modulated energy}.
\end{itemize}
Then, it holds that 
\begin{align}
\frac{\dd}{\dd t}\mathcal{E}_{N}(t)&= \int_{\Delta^{c}}(v(p(x))-v(p(y)))\cdot \nabla V(x-y)(\rho_{N}-\rho)^{\otimes 2}(\dd x\dd y) \notag\\
&-\frac{2}{N}\sum_{i=1}^{N}(v(P_{i})-v(p(X_{i})))((P_{i}-p(X_{i}))D_{x}(v(p))(X_{i})) \notag\\
&+\frac{2}{N}\sum_{i=1}^{N}(\nabla^{2}\mathscr{K}(p(X_{i}))(P_{i}-p(X_{i}))-(\nabla \mathscr{K}(P_{i})-\nabla \mathscr{K}(p(X_{i}))))\cdot \nabla V\ast \rho(X_{i}). \label{time der of renormalized relativistic modulated energy intro}    
\end{align}
Consequently, there is some $C=C(d,\left\Vert \rho\right\Vert_{L^{\infty}L^{a}}, \left\Vert D_{x}p\right\Vert_{\infty} )$ such that it holds that  
\begin{align}
\underset{t\in [0,T]}{\sup}\mathcal{E}_{N}(t)\leq Ce^{CT}(\mathcal{E}_{N}(0)+To_{N}(1)) \label{Gronwall est statement}     
\end{align}
and it holds that 
\begin{align}
\underset{t\in [0,T]}{\sup} W_{1}(\rho_{N}(t,\cdot ),\rho(t,\cdot))\underset{N\rightarrow \infty}{\rightarrow} 0\ \mbox{and}\ \underset{t\in [0,T]}{\sup}\left\Vert (q_{N}-\rho p)(t,\cdot)\right\Vert_{W^{-1,\infty}}\underset{N\rightarrow \infty}{\rightarrow}0. \label{convergence as N     infty statement}    
\end{align} 
\label{main thm}
\end{thm}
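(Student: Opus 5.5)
The proof runs in three stages: the time-derivative identity, a Gr\"onwall estimate, and the passage from the energy bound to the weak convergences.

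\emph{Derivative identity.} Write $\mathcal{E}_N=\mathcal{K}_N+\mathcal{V}_N+\mathcal{C}_N$ and differentiate each part along \eqref{Relativistic N body dynamics}. For the kinetic part we use $\frac{\dd}{\dd t}p(t,X_i)=\partial_t p(X_i)+v(P_i)D_xp(X_i)$, and we substitute $\partial_t p=-v(p)D_xp-\nabla V\ast\rho$ from \eqref{Relativistic Euler intro}. Since $\nabla\mathscr{K}=v$, differentiating $\eta(P_i\vert p(X_i))$ gives
\[
(v(P_i)-v(p(X_i)))\cdot\dot P_i-\nabla^2\mathscr{K}(p(X_i))(P_i-p(X_i))\cdot\tfrac{\dd}{\dd t}p(X_i).
\]
The transport part $(v(P_i)-v(p(X_i)))D_xp(X_i)$ combines with the Hessian to produce the second line of \eqref{time der of renormalized relativistic modulated energy intro}. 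Here we use the symmetry of $\nabla^2\mathscr{K}$ and the identity $D_x(v(p))=D_xp\,\nabla^2\mathscr{K}(p)$.

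In the force term, $\dot P_i$ contains $-\nabla V\ast\rho_N(X_i)$, with the self-interaction excluded. We split it as $-\nabla V\ast(\rho_N-\rho)(X_i)-\nabla V\ast\rho(X_i)$. The $\nabla V\ast\rho$ pieces from $\dot P_i$ and from $\partial_tp$ combine into the third line. The piece $-\frac{2}{N}\sum_i(v(P_i)-v(p(X_i)))\cdot\nabla V\ast(\rho_N-\rho)(X_i)$ must cancel against part of $\frac{\dd}{\dd t}\mathcal{V}_N$.

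To see this, differentiate $\mathcal{V}_N$ using $\dot X_i=v(P_i)$ and $\partial_t\rho=-\mathrm{div}(\rho v(p))$. This yields $\frac{2}{N}\sum_i v(P_i)\cdot\nabla V\ast(\rho_N-\rho)(X_i)$ (off-diagonal) minus $2\int v(p)\cdot\nabla V\ast(\rho_N-\rho)\,\dd\rho$. After adding and subtracting $v(p(X_i))$, the remainder symmetrizes, by oddness of $\nabla V$, into the commutator first line. This is the discrete analogue of the computation behind \eqref{statement of formula for der of H} in Theorem \ref{weak strong stability intro}.

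\emph{Gr\"onwall estimate.} The first line is bounded by Proposition \ref{Commutator estimates} applied with $u=v(p)$, which is Lipschitz because $v$ is $1$-Lipschitz. This gives $C(\mathcal{V}_N+\mathcal{C}_N+o_N(1))$, since $\mathcal{C}_N=o_N(1)$ (Proposition \ref{non negativity}). We need $\rho\in L^\infty$, which follows from $W^{1,a}\subset L^\infty$ for $a>d$.

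The second and third lines must be bounded by $C\mathcal{K}_N$, and I expect this to be the main obstacle. The difficulty is that $\eta$ is only non-uniformly convex. Near $P=p$, $\eta(P\vert p)\gtrsim |P-p|^2/(1+|p|^2)^{3/2}$, while for large $|P|$ it grows only linearly. We therefore need the pointwise bounds
\[
|v(P)-v(p)|\,|P-p|\le C(\|p\|_\infty)\,\eta(P\vert p)
\qquad\text{and}\qquad
\bigl|\nabla\mathscr{K}(P)-\nabla\mathscr{K}(p)-\nabla^2\mathscr{K}(p)(P-p)\bigr|\le C\,\eta(P\vert p).
\]
I would prove these by splitting $|P-p|\le 1$, where a Taylor expansion is quadratic with constants depending on $|p|$, from $|P-p|>1$. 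In the second regime $|v(P)-v(p)|\le 2$ and $\eta(P\vert p)\ge c(\|p\|_\infty)|P-p|$, since $\mathscr{K}$ grows linearly with a slope gap from $|\nabla\mathscr{K}(p)|<1$. The remainder is at most $2+C|P-p|$. These are presumably the contents of Lemma \ref{coercivity positivity of eta}.

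Finally, $\|\nabla V\ast\rho\|_\infty\le C\|\rho\|_{L^a}$ for $a>d$, and $\|D_x p\|_\infty$ is finite. Gr\"onwall's lemma then yields \eqref{Gronwall est statement}.

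\emph{Weak convergences.} Assume $\mathcal{E}_N(0)\to0$. The coercivity of $\mathcal{V}_N+\mathcal{C}_N$ (Proposition \ref{non negativity}, in the form $\|\rho_N-\rho\|_{\dot H^{-1}}$ after smearing, up to $o_N(1)$) controls $W_1(\rho_N,\rho)$ or a negative Sobolev norm of $\rho_N-\rho$. For the momentum, test against $\varphi\in W^{1,\infty}$ and write
\[
\int\varphi\,\dd(q_N-\rho p)=\frac1N\sum_i\varphi(X_i)(P_i-p(X_i))+\int\varphi p\,\dd(\rho_N-\rho).
\]
The second term is controlled by the previous step, since $\varphi p$ is Lipschitz. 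For the first term, the linear-or-quadratic lower bound on $\eta$ gives $|P_i-p(X_i)|\le C\big(\eta+\sqrt{\eta}\big)$. Averaging and applying Cauchy--Schwarz bounds it by $C\big(\mathcal{K}_N+\sqrt{\mathcal{K}_N}\big)\to0$ uniformly on $[0,T]$.
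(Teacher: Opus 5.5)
Your proposal is correct, and most of it is the paper's argument. The identity comes from differentiating $\mathcal{K}_N$ and $\mathcal{V}_N$ separately. Your upfront split $\nabla V\ast\rho_N=\nabla V\ast(\rho_N-\rho)+\nabla V\ast\rho$ is a tidier bookkeeping of the cancellations the paper tracks through the terms $T^k_j$, $L_k$, $M_k$. The Gr\"onwall step uses Proposition~\ref{Commutator estimates} with $u=v(p)$ together with Lemmas~\ref{elementary ine lemma} and~\ref{coercivity positivity of eta}, exactly as you anticipate. The $W_1$ convergence is also the paper's route.

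The genuinely different step is the convergence of $q_N$. For the term $\frac1N\sum_i\varphi(X_i)\cdot(P_i-p(X_i))$, the paper splits the particles according to whether $|P_i-p(X_i)|$ is below or above $1/(\lambda_N-1)$. It bounds the first group by $1/(\lambda_N-1)$ and the second by $C\lambda_N\mathcal{E}_N$, and then tunes $\lambda_N\to\infty$ against $\mathcal{E}_N(0)$ and $o_N(1)$. You instead invert $\eta\ge c\min\{r,r^2\}$ to get $r\le C(\eta+\sqrt{\eta})$ pointwise. Cauchy--Schwarz then gives the bound $C\|\varphi\|_\infty(\mathcal{K}_N+\sqrt{\mathcal{K}_N})$ in one line.

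Your route is shorter, gives the explicit rate $O\big((\mathcal{E}_N(0)+o_N(1))^{1/2}\big)$, and removes the parameter choice altogether. In the paper's version, $\lambda_N$ must in fact be the minimum rather than the maximum of $\mathcal{E}_N(0)^{-1/2}$ and $o_N(1)^{-1/2}$ for the stated bound on $\Sigma_2$ to hold. You are also right to make the hypothesis $\mathcal{E}_N(0)\to0$ explicit. The theorem leaves it implicit, but both convergence statements need it.

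Two small corrections.
\begin{enumerate}
\item The slope-gap observation by itself only gives $\eta(P\vert p)\ge\alpha_R|P-p|-\beta_R$. This is a linear lower bound only beyond the paper's threshold $M_R=\max\{R,2\beta_R/\alpha_R\}$, and already $M_0=2$, so it does not cover all of $\{|P-p|>1\}$. To handle the intermediate range, either use the quadratic bound on the larger ball $\{|P-p|\le M_R\}$, as the paper does, or use convexity. Since $\eta(\cdot\vert p)$ is convex with $\eta(p\vert p)=0$, the map $t\mapsto\eta(p+te\vert p)/t$ is nondecreasing for every unit vector $e$. Hence the quadratic bound at $|P-p|=1$ propagates to a linear bound for $|P-p|\ge1$.
\item Proposition~\ref{non negativity} only yields nonnegativity of $\mathcal{V}_N+\mathcal{C}_N$. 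The quantitative control of $W_1(\rho_N,\rho)$ that you describe (smearing and $\dot H^{-1}$-type coercivity) is Proposition~\ref{coercivity inequality}, and that is what should be invoked there.
\end{enumerate}
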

\subsubsection{Main difficulties in comparison to the non-relativistic settings.}\label{subsec novelty} The key difference  between the proof of Theorem \ref{main thm} and the proof of the non-relativistic mean-field limit in \cite{duerinckx2020mean} lies in the fact that the kinetic part of the relativistic renormalized modulated energy $\mathcal{K}_{N}(t)$ is given by means of $\mathscr{K}$. There is crucial difference between the case of quadratic $\mathscr{K}$ (as in the non-relativistic settings) and $\mathscr{K}$ given by \eqref{def of rel kinetic part} (which has linear growth):  the case where $\eta(P\vert p)=\left\vert P-p\right\vert^{2}$ in particular falls into the category of  functions satisfying the following "weak-strong coercivity estimate": 
\begin{align}
\eta(P\vert p)\geq C\left\vert P-p\right\vert^{2} \label{quadratic weak coer}     
\end{align}
where $C>0$ depends only on $\left\Vert p\right\Vert_{\infty}$. However, in the case where $\eta$ takes the form \eqref{definition of eta intro} an estimate of the type \eqref{quadratic weak coer} cannot hold with a constant independent of $P$. Indeed, suppose on the contrary that there is a constant $C=C_{R}>0$ such that for all $(P,p)\in \mathbb{R}^{d}\times B_{R}(0)$ it holds that \begin{align*}
\eta(P\vert p)\geq C\left\vert P-p\right\vert^{2}.     
\end{align*}
In particular, for $p=0$ we get 
\begin{align*}
\sqrt{1+\left\vert P\right\vert^{2} }-1 \geq C \left\vert P\right\vert^{2}.      
\end{align*}
Dividing by $\left\vert P\right\vert^{2}$ yields 
\begin{align*}
 C\leq \frac{\sqrt{1+\left\vert P\right\vert^{2}}}{\left\vert P\right\vert^{2}}-\frac{1}{\left\vert P\right\vert^{2}}\underset{\left\vert P \right\vert\rightarrow \infty }{\rightarrow}  0,     
\end{align*}
which contradicts the assumption $C>0$. 
We are thus lead to reject a quadratic lower bound. Instead, for $\eta$ given by \eqref{definition of eta intro} we are able to prove the following modified estimate:  
\begin{align}
\eta(P\vert p)\geq \frac{C\left\vert P-p\right\vert^{2}}{1+\left\vert P-p\right\vert } \label{weak strong coer bound}   
\end{align}
where $C$ depends only on $\left\Vert p\right\Vert_{\infty} $. That $C$ is independent of a bound on $P$ is of crucial importance, since eventually $P_{i}$ will play the role of $P$, and pointwise bounds on $P_{i}$ usually grow very fast in $N$, which would prevent passing to the limit as $N\rightarrow \infty$. The discrepancy between the estimate \eqref{weak strong coer bound} and the estimate \eqref{quadratic weak coer} is intimately related to the fact that $\nabla^{2}\mathscr{K}(z)$ is not uniformly convex for $\mathscr{K}$ given by \eqref{def of rel kinetic part} whereas it is uniformly convex when $\mathscr{K}$ is quadratic. Even after identifying that $\eta$ satisfies the bound \eqref{weak strong coer bound} it still remains to prove that the second and and third terms in the right-hand side of \eqref{time der of renormalized relativistic modulated energy intro} are bounded by means of $\frac{1}{N}\sum_{i=1}^{N}\frac{\left\vert P_{i}-p(X_{i})\right\vert^{2} }{1+\left\vert P_{i}-p(X_{i})\right\vert^{2} }$ (and consequently by means of $\frac{1}{N}\sum_{i=1}^{N}\eta(P_{i}\vert p(X_{i}))$). This requires an additional argument that is absent from the non-relativistic settings. Finally, showing that the vanishing of $\underset{t\in [0,T]}{\sup}\mathcal{E}_{N}(t)$ entails the weak convergence of $q_{N}$ to $\rho p$ also has to be modified. On a more technical level, we also need to establish the local well-posdeness of \eqref{Relativistic Euler intro}, as it appears to be absent from the litearature. 

\vspace{0.4 cm}

The paper is organized as follows. In section \ref{local well posedness of relativistic Euler} we prove the local well-posedness of \eqref{Relativistic Euler intro}. The existence part is straightforward. More important is uniqueness, which is a consequence of the weak-strong stability stated in Theorem \ref{weak strong stability intro}, and provides a good preparation towards the mean-field limit proved in section \ref{Mean field limit section}. The estimate \eqref{weak strong coer bound}, which represents part of the novelty of this work, is included in Lemma \ref{coercivity positivity of eta}. Combined with Lemma \ref{elementary ine lemma} and the calculation of the time variation of $\mathcal{E}(t)$ it enables us to close the Gr\"onwall estimate for $\mathcal{E}(t)$. In section \ref{Mean field limit section} we renormalize the argument presented in section \ref{local well posedness of relativistic Euler} in order to prove the mean-field limit stated in Theorem \ref{main thm}.        

\section{The relativistic Euler-Poisson equation: local existence and stability}\label{local well posedness of relativistic Euler}
We prove the local well-posedness of \eqref{Relativistic Euler intro}. The literature on the well-posedness of multi-dimensional relativistic equations of Euler-Poisson type appears to be limited, although the $1D$ case has been addressed in \cite{rozanova2026sufficient}, where the authors identify conditions on the initial data for which even global well-posedness is ensured. We start by proving the apriori estimates needed for local existence of classical solutions on short time intervals and then move to prove the weak-strong stability principle, which is more difficult.  
\subsection{Short time existence.} 
We concentrate on proving apriori estimates on a short time interval. It is then standard to deduce existence on a short time interval via a fixed point argument. Given that this is not the main focus of this work we will not include the details for the fixed point argument. 
\begin{thm}
Suppose that: 
\begin{itemize}
    \item $(\rho^{0},p^{0})\in W^{1,a}(\mathbb{T}^{d})\times W^{2,a}(\mathbb{T}^{d})$ for some $a>d$. 
    \item $V$ is the $d$-dimensional Coulomb interaction on $\mathbb{T}^{d}$.
\end{itemize}
Let $(\rho,p)$ is a classical solution to \eqref{Relativistic Euler intro} with initial data $(\rho^
{0},p^{0})$. Then, there is some $T=T(a,d,\left\Vert \rho^{0}\right\Vert_{W^{1,a}},\left\Vert p^{0}\right\Vert_{W^{2,a}})>0$ and a constant $C=C(a,d,\left\Vert \rho^{0}\right\Vert_{W^{1,a}},\left\Vert p^{0}\right\Vert_{W^{2,a}})>0$ such that 
\begin{align*}
\underset{t\in [0,T]}{\sup}(\left\Vert \rho(t,\cdot)\right\Vert_{W^{1,a}}+\left\Vert p(t,\cdot)\right\Vert_{W^{2,a}})\leq  C.      
\end{align*}
\label{short time well posedness}
\end{thm}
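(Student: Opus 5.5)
We derive energy estimates on $Y(t)\coloneqq\left\Vert \rho(t)\right\Vert_{W^{1,a}}+\left\Vert p(t)\right\Vert_{W^{2,a}}$ and close them with a nonlinear Gr\"onwall inequality of the form $\dot Y\leq C(1+Y)^{m}$. This yields a time $T>0$ depending only on $Y(0)$, $a$ and $d$ on which $Y$ remains bounded. The structural inputs are three. First, $W^{1,a}\hookrightarrow L^{\infty}$ since $a>d$, so $\rho$, $D_{x}p$ and $p$ are bounded in terms of $Y$. Second, $v$ is smooth with all derivatives bounded; indeed $|v|\leq1$ and $|Dv|\leq 1$. Hence $\|v(p)\|_{W^{2,a}}\leq C(1+Y)^{2}$ by the chain rule and the embedding. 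Third, elliptic regularity for the Coulomb kernel gives $\|\nabla V\ast\rho\|_{W^{2,a}}\leq C\|\rho\|_{W^{1,a}}$, because $\nabla^{2}V\ast$ is a Calder\'on--Zygmund operator on $L^{a}$ and the mean of $\rho$ is subtracted.

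\emph{Estimate for $p$.} We apply $\partial^{\alpha}$ with $|\alpha|\leq 2$ to $\partial_{t}p+v(p)D_{x}p=-\nabla V\ast\rho$. We multiply by $|\partial^{\alpha}p|^{a-2}\partial^{\alpha}p$ and integrate over $\mathbb{T}^{d}$. The top-order transport term $v(p)\cdot\nabla\partial^{\alpha}p$ integrates by parts to $-\frac1a\int \mathrm{div}(v(p))|\partial^{\alpha}p|^{a}$, which is bounded by $\|D_{x}p\|_{\infty}\|p\|_{W^{2,a}}^{a}$. The commutator terms with $|\alpha|=2$ are $\partial^{2}(v(p))\,D_{x}p$ and $\partial(v(p))\,\partial D_{x}p$. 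The second is bounded by $\|D_{x}p\|_{\infty}\|D^{2}p\|_{a}$. The first involves $D^{2}p\,D_{x}p$ and $(D_{x}p)^{2}D_{x}p$, both controlled by $\|D_{x}p\|_{\infty}$ and $\|D^{2}p\|_{a}$. The forcing term is bounded by $C\|\rho\|_{W^{1,a}}$. Together these give $\frac{\dd}{\dd t}\|p\|_{W^{2,a}}\leq C(1+Y)^{3}$.

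\emph{Estimate for $\rho$.} We write the continuity equation as $\partial_{t}\rho+v(p)\cdot\nabla\rho+\rho\,\mathrm{div}\,v(p)=0$ and differentiate once. The transport part is handled by integration by parts as above. The remaining terms are $\nabla\rho\,D(v(p))$, $\nabla\rho\,\mathrm{div}\,v(p)$ and $\rho\,\nabla\mathrm{div}\,v(p)$. The first two are bounded by $\|D_{x}p\|_{\infty}\|\nabla\rho\|_{a}$. The last is bounded by $\|\rho\|_{\infty}\|v(p)\|_{W^{2,a}}$, and this is exactly why $p$ must be controlled in $W^{2,a}$ and not merely $W^{1,a}$. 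This gives $\frac{\dd}{\dd t}\|\rho\|_{W^{1,a}}\leq C(1+Y)^{3}$.

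Summing the two estimates gives $\dot Y\leq C(1+Y)^{3}$. Comparison with the ODE $\dot z=C(1+z)^{3}$ yields $Y(t)\leq 2(1+Y(0))$ for $t\leq T\coloneqq c/(1+Y(0))^{2}$.

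The main obstacle is the bookkeeping at top order. One must check that every term containing $D^{2}p$ or $\nabla\rho$ in $L^{a}$ appears linearly, multiplied only by $L^{\infty}$ quantities such as $D_{x}p$, $\rho$ and derivatives of $v$. No $L^{a}\times L^{a}$ products may appear, and this is where $a>d$ and the boundedness of $Dv$ and $D^{2}v$ are used. Since $v$ is not linear, the chain-rule terms in $\partial^{2}(v(p))$ must be checked individually. Doing the computation on smooth solutions suffices for an a priori estimate; if needed, one justifies it by mollification.
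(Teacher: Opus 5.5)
Your proposal is correct and follows essentially the paper's route: $L^{a}$ energy estimates at top order, integration by parts of the transport term, the embedding $W^{1,a}\hookrightarrow L^{\infty}$ to put every lower-order factor in $L^{\infty}$, and a superlinear Gr\"onwall inequality. The difference is only in bookkeeping. The paper applies $\Delta$ to the momentum equation, so the forcing becomes $-\Delta\nabla V\ast\rho=\nabla\rho$ and no $W^{2,a}$ bound on $\nabla V\ast\rho$ is needed; the price is that it uses $\|\Delta p\|_{a}$ as a stand-in for $\|p\|_{W^{2,a}}$ and leaves the lower-order norms implicit. You instead estimate every $\partial^{\alpha}p$ with $|\alpha|\leq 2$, using Calder\'on--Zygmund on the forcing, and track the full inhomogeneous norms, which is slightly more complete.
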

\begin{proof}
In what follows $C>0$ designates any constant which depends only on $a$ and $d$, and may vary between the terms. \\ 
\textbf{Estimate on $\frac{\dd}{\dd t}\left\Vert \nabla \rho(t,\cdot)\right\Vert_{a}^{a}$.} Taking the gradient in $x$ in the equation for $\rho$ we  obtain  
\begin{align*}
\partial_{t}\nabla\rho+\nabla\mathrm{div}(v(p)\rho)=0    
\end{align*}
so that 
\begin{align*}
\partial_{t}\nabla\rho+\nabla(\nabla\rho \cdot v(p)+\rho \mathrm{div}(v(p)))=0 \end{align*}
so that 
\begin{align*}
\partial_{t}\nabla\rho +\nabla^{2}\rho v(p)+D_{x}pD_{p}v(p)\nabla\rho +\mathrm{div}(v(p))\nabla\rho +\rho\nabla \mathrm{div}(v(p))=0.   
\end{align*}
Multiplying the above equation by $a\nabla\rho \left\vert \nabla\rho\right\vert^{a-2} $ we obtain 
\begin{align*}
\frac{\dd}{\dd t}\left\Vert \nabla \rho(t,\cdot)\right\Vert_{a}^{a}=&-\int_{\mathbb{T}^{d}} \nabla\left\vert \nabla \rho\right\vert^{a}\cdot v(p)\ \dd x-a\int_{\mathbb{T}^{d}}  (D_{x}pD_{p}v(p)\nabla \rho)\cdot\nabla \rho \left\vert \nabla \rho\right\vert^{a-2}\ \dd x\\
&-a\int_{\mathbb{T}^{d}} \left\vert \nabla \rho\right\vert^{a}\mathrm{div}(v(p))\ \dd x-a\int_{\mathbb{T}^{d}} \rho \left\vert \nabla \rho\right\vert^{a-2} \nabla \rho\cdot \nabla \mathrm{div}(v(p))\ \dd x\coloneqq\sum_{k=1}^{4}I_{k}.  \end{align*}
Integrating by parts we see that 
\begin{align*}
I_{1}+I_{3}&=-(a-1)\int_{\mathbb{T}^{d}} \left\vert \nabla \rho\right\vert^{a}\mathrm{div}(v(p))\ \dd x=-(a-1)\int_{\mathbb{T}^{d}} \left\vert \nabla \rho\right\vert^{a}D_{p}v(p):D_{x}p\ \dd x.  
\end{align*}
Note that $\left\Vert D_{p}v \right\Vert_{\infty}\leq 1 $. Therefore, using the Sobolev embedding $W^{1,a}(\mathbb{T}^{d})\hookrightarrow L^{\infty}(\mathbb{T}^{d})$ for $a>d$  and that $c\left\Vert p\right\Vert_{W^{2,a}} \leq \left\Vert \Delta p\right\Vert\leq C\left\Vert p\right\Vert_{W^{2,a}}  $ we get   
\begin{align}
I_{1}+I_{3}&\leq C\left\Vert D_{x}p(t,\cdot)\right\Vert_{\infty} \left\Vert \nabla \rho(t,\cdot)\right\Vert_{a}^{a} \notag\\
&\leq C\left\Vert p(t,\cdot)\right\Vert_{W^{2,a}}\left\Vert \nabla \rho(t,\cdot)\right\Vert_{a}^{a}  \leq C\left\Vert \Delta p(t,\cdot)\right\Vert_{a}\left\Vert \nabla \rho(t,\cdot)\right\Vert_{a}^{a}  . \label{I1I3 est}     
\end{align}
From the same considerations we estimate  
\begin{align}
I_{2}&\leq \left\Vert D_{p}v\right\Vert_{\infty} \left\Vert D_{x}p(t,\cdot)\right\Vert_{\infty} \int_{\mathbb{T}^{d}} \left\vert \nabla \rho\right\vert^{a}\ \dd x \notag\\
&\leq \left\Vert D_{x}p(t,\cdot)\right\Vert_{\infty}\left\Vert \nabla \rho(t,\cdot)\right\Vert_{a}^{a}\leq C\left\Vert \Delta p(t,\cdot)\right\Vert_{a}\left\Vert \nabla \rho(t,\cdot)\right\Vert_{a}^{a}.  
\label{I2 est}      
\end{align}
To estimate $I_{4}$ we observe that
\begin{align}
I_{4}&=-a\int_{\mathbb{T}^{d}} \rho \left\vert \nabla \rho\right\vert^{a-2}\nabla \rho\cdot \nabla (D_{p}v:D_{x}p)\ \dd x \notag\\
&\leq Ca\left\Vert \rho(t,\cdot)\right\Vert_{\infty} \int_{\mathbb{T}^{d}} \left\vert \nabla \rho\right\vert^{a-1}(\left\vert D_{x}p\right\vert^{2}+\left\vert D_{x}^{2}p\right\vert)\ \dd x \notag\\
&\leq C \left\Vert \rho(t,\cdot)\right\Vert_{\infty}\left(\left\Vert \nabla \rho(t,\cdot)\right\Vert_{a}^{a}+\left\Vert D_{x}p(t,\cdot)\right\Vert_{2a}^{2a}  +\left\Vert \Delta p(t,\cdot)\right\Vert_{a}^{a}\right) \notag\\
&\leq C\left\Vert \nabla \rho(t,\cdot)\right\Vert_{a}\left(\left\Vert \nabla \rho(t,\cdot)\right\Vert_{a}^{a} +\left\Vert \Delta p(t,\cdot)\right\Vert_{a}^{a}+\left\Vert \Delta p(t,\cdot)\right\Vert_{a}^{2a} \right) \notag\\
& \leq C(\left\Vert \nabla \rho(t,\cdot)\right\Vert_{a}^{a+1}+\left\Vert \nabla \rho(t,\cdot)\right\Vert_{a}\left\Vert \Delta p(t,\cdot)\right\Vert_{a}^{a}+\left\Vert \nabla \rho(t,\cdot)\right\Vert_{a}\left\Vert \Delta p(t,\cdot)\right\Vert_{a}^{2a})   ,  \label{I4 est}    
\end{align}
where in the third inequality we used the embedding $W^{1,a}(\mathbb{T}^{d})\hookrightarrow L^{\infty}(\mathbb{T}^{d})$. 
Gathering \eqref{I1I3 est}-\eqref{I4 est} we arrive at the estimate 
\begin{align}
\frac{\dd}{\dd t}\left\Vert \nabla \rho(t,\cdot)\right\Vert_{a}^{a}\leq& C\Big(\left\Vert \Delta p(t,\cdot)\right\Vert_{a}\left\Vert \nabla \rho(t,\cdot)\right\Vert_{a}^{a}+\left\Vert \nabla \rho(t,\cdot)\right\Vert_{a}\left\Vert \Delta p(t,\cdot)\right\Vert_{a}^{a} \notag\\
&+\left\Vert \nabla \rho(t,\cdot)\right\Vert_{a}^{a+1}+\left\Vert \nabla \rho(t,\cdot)\right\Vert_{a}\left\Vert \Delta p(t,\cdot)\right\Vert_{a}^{2a}\Big) \notag\\
\leq& C(\left\Vert \Delta p(t,\cdot)\right\Vert_{a}^{\frac{2a^{2}}{a-1}} +\left\Vert \nabla \rho(t,\cdot)\right\Vert_{a}^{a}+\left\Vert \nabla \rho(t,\cdot)\right\Vert_{a}^{a+1}+\left\Vert \Delta p(t,\cdot)\right\Vert_{a}^{a}+\left\Vert \Delta p(t,\cdot)\right\Vert_{a}^{a+1}).  
\label{ine for nablarho}         
\end{align}
\textbf{Estimate on $\frac{\dd}{\dd t}\left\Vert \Delta p(t,\cdot)\right\Vert_{a}^{a}$.} Taking the Laplacian in the equation for $p$ we find that 
\begin{align*}
\frac{\dd}{\dd t}\Delta p+\Delta(v(p)D_{x}p)=-\Delta\nabla V\ast \rho    
\end{align*}
and so using that $-\Delta V=\delta_{0}-1$ we find 
\begin{align*}
\frac{\dd}{\dd t}\Delta p+\Delta(v(p)D_{x}p)=\nabla \rho.     
\end{align*}
Expanding, we obtain 
\begin{align*}
\frac{\dd}{\dd t}\Delta p+\Delta(v(p))D_{x}p+2\sum_{k=1}^{d}\partial_{x_{k}}(v(p)) D_{x}\partial_{x_{k}}p+v(p)D_{x}\Delta p=\nabla \rho .     
\end{align*}
Multiplying by $a\Delta p\left\vert \Delta p\right\vert^{a-2}$, it follows that 
\begin{align*}
\frac{\dd}{\dd t}\left\Vert \Delta p(t,\cdot)\right\Vert_{a}^{a}=&-a\int_{\mathbb{T}^{d}} (\Delta (v(p))D_{x}p)\cdot \Delta p\left\vert \Delta p\right\vert^
{a-2}\ \dd x-\int_{\mathbb{T}^{d}} v(p)\cdot \nabla \left\vert \Delta p\right\vert ^{a}\ \dd x\\
&-2a\sum_{k=1}^{d}\int_{\mathbb{T}^{d}} (\partial_{x_{k}}(v(p))D_{x}\partial_{x_{k}}p)\cdot \Delta p\left\vert \Delta p\right\vert^{a-2}\ \dd x\\
&+a\int_{\mathbb{T}^{d}}\nabla \rho \cdot \Delta p\left\vert \Delta p\right\vert^{a-2}\ \dd x \coloneqq \sum_{k=1}^{4}J_{k}.    
\end{align*}
To estimate $J_{1}$, we use the formula $\Delta(v(p))=D_{p}v(p)\Delta p+D^{2}_{p}v(p):(D_{x}^{T}pD_{x}p)$ to find that  
\begin{align}
J_{1}&=-a\int_{\mathbb{T}^{d}} (D_{p}v(p)\Delta p+D_{p}^{2}v(p):(D_{x}^{T}pD_{x}p))\cdot \Delta p\left\vert \Delta p\right\vert^{a-2}\ \dd x \notag\\
&=-a\int_{\mathbb{T}^{d}} D_{p}v(p):\Delta p\otimes \Delta p\left\vert \Delta p\right\vert^{a-2}\ \dd x-a\int_{\mathbb{T}^{d}} (D_{p}^{2}v(p):D_{x}^{T}pD_{x}p) \cdot \Delta p\left\vert \Delta p\right\vert^{a-2}\ \dd x.   \label{rhs ofJ1}
\end{align}
The first term in the right-hand side of \eqref{rhs ofJ1} is bounded by 
\begin{align*}
C\left\Vert D_{p}v\right\Vert_{\infty}\left\Vert \Delta p(t,\cdot)\right\Vert_{a}^{a}\leq C\left\Vert \Delta p(t,\cdot)\right\Vert_{a}^{a} 
\end{align*}
whereas the second term in the right-hand side of \eqref{rhs ofJ1} is bounded by 
\begin{align*}
&C\left\Vert D_{p}^{2}v\right\Vert_{\infty}\left\Vert D_{x}p(t,\cdot)\right\Vert_{\infty}\int_{\mathbb{T}^{d}} \left\vert D_{x}p\right\vert\left\vert \Delta p\right\vert^{a-1}\ \dd x \\
&\leq C\left\Vert D
_{x}p(t,\cdot)\right\Vert_{\infty}\left(\left\Vert D_{x}p(t,\cdot)\right\Vert_{a}^{a}+\left\Vert \Delta p(t,\cdot)\right\Vert_{a}^{a}  \right)\\
&\leq C\left\Vert D_{x}p(t,\cdot)\right\Vert_{\infty}\left\Vert \Delta p(t,\cdot)\right\Vert_{a}^{a}  \leq C\left\Vert \Delta p(t,\cdot)\right\Vert_{a}^{a+1}.          
\end{align*}
Therefore we obtain
\begin{align}
J_{1}\leq C(\left\Vert \Delta p(t,\cdot)\right\Vert_{a}^{a}+\left\Vert \Delta p(t,\cdot)\right\Vert_{a}^{a+1}). \label{est J1}   
\end{align}
To estimate $J_{2}$, we integrate by parts to find 
\begin{align}
J_{2}&=\int_{\mathbb{T}^{d}} \mathrm{div}(v(p))\left\vert \Delta p\right\vert^{a}\ \dd x=\int_{\mathbb{T}^{d}} D_{p}v(p):D_{x}p\left\vert \Delta p\right\vert^{a}\ \dd x \notag\\
&\leq C\left\Vert D_{x}p(t,\cdot)\right\Vert_{\infty} \left\Vert \Delta p(t,\cdot)\right\Vert_{a}^{a}   \leq C\left\Vert \Delta p(t,\cdot)\right\Vert_{a}^{a+1}.
\label{J2 est}     
\end{align}
To estimate $J_{3}$ we observe that 
\begin{align}
J_{3}\leq C\left\Vert D_{x}p(t,\cdot)\right\Vert_{\infty} \int_{\mathbb{T}^{d}}\left\vert D_{x}^{2}p\right\vert \left\vert \Delta p\right\vert^{a-1}\ \dd x\leq C\left\Vert D_{x}p(t,\cdot)\right\Vert_{\infty}\left\Vert \Delta p(t,\cdot)\right\Vert_{a}^{a}\leq C\left\Vert \Delta p(t,\cdot)\right\Vert_{a}^{a+1}.         
\label{J3 est}     
\end{align}
As for $J_{4}$, we have 
\begin{align}
J_{4}\leq a\int_{\mathbb{T}^{d}}\left\vert \nabla \rho\right\vert\left\vert \Delta p\right\vert^{a-1}\ \dd x\leq  C(\left\Vert \nabla \rho(t,\cdot)\right\Vert_{a}^{a}+\left\Vert \Delta p(t,\cdot)\right\Vert_{a}^{a}).  \label{J4 est}     
\end{align}
So, to conclude, gathering \eqref{est J1}-\eqref{J4 est} we obtain 
\begin{align}
\frac{\dd}{\dd t}\left\Vert \Delta p(t,\cdot)\right\Vert_{a}^{a}\leq C(\left\Vert \nabla \rho(t,\cdot)\right\Vert_{a}^{a} +\left\Vert \Delta p(t,\cdot)\right\Vert_{a}^{a}+\left\Vert \Delta p(t,\cdot)\right\Vert_{a}^{a+1}  ).  \label{ine of Delta p}    
\end{align}
Set $S(t)=1+\left\Vert \nabla \rho(t,\cdot)\right\Vert_{a}^{a}+\left\Vert \Delta p(t,\cdot)\right\Vert_{a}^{a}$. In view of  \eqref{ine for nablarho} and \eqref{ine of Delta p}  we find that 
\begin{align*}
\frac{\dd}{\dd t}S(t)\leq CS^{\mathfrak{a}}(t) \ \mbox{for some}\ \mathfrak{a}>1.      
\end{align*}
Solving the above differential inequality we infer that $\underset{t\in [0,T]}{\sup}S(t)\leq C$ for some $T>0$ and $C>0$ both depending only on $a,d,\left\Vert \rho^{0}\right\Vert_{W^{1,a}},\left\Vert p^{0}\right\Vert_{W^{2,a}}$, which establishes the desired apriori estimate. 
\end{proof}
\subsection{Weak-strong stability.} Prior to proving the weak-strong stability principle (as stated in Theorem \ref{weak strong stability intro}) we will need several key preliminary lemmas.  We start with the following elementary inequality, which will be repeatedly used in the sequel.  
\begin{lem}\label{elementary ine about min}
For all $r\geq 0$ it holds that 
\begin{align*}
\frac{r^{2}}{1+r}\leq \min\{r,r^{2}\}\leq \frac{2r^{2}}{1+r}.     
\end{align*}
\end{lem}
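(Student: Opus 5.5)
The plan is to split into the two cases $0\le r\le 1$ and $r\geq 1$, since these are exactly the regimes in which $\min\{r,r^{2}\}$ equals $r^{2}$ and $r$, respectively. In each case both inequalities reduce to comparing $1+r$ with an explicit constant or with $r$. No earlier results of the paper are needed; the statement is purely elementary.

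\textbf{Case $0\le r\le 1$.} Here $\min\{r,r^{2}\}=r^{2}$.
\begin{itemize}
\item For the lower bound, since $1+r\geq 1$ we get $\frac{r^{2}}{1+r}\le r^{2}$.
\item For the upper bound, $1+r\le 2$ gives $r^{2}\le \frac{2r^{2}}{1+r}$.
\end{itemize}

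\textbf{Case $r\geq 1$.} Here $\min\{r,r^{2}\}=r$.
\begin{itemize}
\item For the lower bound, $r^{2}\le r(1+r)$ holds trivially, hence $\frac{r^{2}}{1+r}\le r$.
\item For the upper bound, $r\le \frac{2r^{2}}{1+r}$ is equivalent (for $r>0$) to $1+r\le 2r$, i.e. $r\geq 1$, which is exactly the case assumption.
\end{itemize}
The point $r=0$ is trivial, since all three quantities vanish.

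There is no genuine obstacle here. The only care needed is to keep track of which branch of the minimum is active, so that the factor $2$ in the upper bound is used in the correct regime: it absorbs $1+r\le 2$ when $r\le1$, and $1+r\le 2r$ when $r\geq1$.
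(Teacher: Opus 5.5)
Your proof is correct and follows the same route as the paper: the paper makes the same split into $0\le r\le 1$ (where the minimum is $r^{2}$) and $r>1$ (where it is $r$), and handles each inequality by the same comparisons, $1+r\le 2$ in the first case and $1+r\le 2r$ in the second.
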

\begin{proof}
Start with the upper bound. If $0\leq r\leq 1$ then 
\begin{align*}
\min\{r,r^{2}\}=r^{2}\leq \frac{2r^{2}}{1+r},    
\end{align*}
whereas if $r>1$ then 
\begin{align*}
\min\{r,r^{2}\}=r=\frac{2r^{2}}{r+r}\leq \frac{2r^{2}}{1+r}.     
\end{align*}
As for the lower bound, if $0\leq r\leq 1$ we have 
\begin{align*}
\min\{r,r^{2}\}=r^{2}\geq \frac{r^{2}}{1+r},    
\end{align*}
whereas if $r>1$ then 
\begin{align*}
\min\{r,r^{2}\}=r=\frac{2r^{2}}{2r}\geq \frac{2r^{2}}{2+2r}=\frac{r^{2}}{1+r}.      
\end{align*}
\end{proof}
The following Lemma is a key component in  closing the Gr\"onwall estimate for $\mathcal{E}(t)$ and  $\mathcal{E}_{N}(t)$. Note that in particular it explains why $\eta(p_{1}\vert p_{2})$ is non-negative (and so the same is true for $\mathcal{E}(t)$).   
\begin{lem}\label{coercivity positivity of eta}
There is a constant $C_{R}>0$ such that for any $(p_{1},p_{2})\in \mathbb{R}^{d}\times B_{R}(0)$ it holds that 
 \begin{align*}
 \eta(p_{1}\vert p_{2})\geq \frac{C_{R}\left\vert p_{1}-p_{2}\right\vert^{2}}{1+\left\vert p_{1}-p_{2}\right\vert}.     
 \end{align*}
In particular, $\eta(p_{1}\vert p_{2})\geq 0$ for all $p_{1},p_{2}\in \mathbb{R}^{d}$. 
\end{lem}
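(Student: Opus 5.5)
We will write $\eta(p_{1}\vert p_{2})$ as a Bregman divergence in integral form and bound the Hessian of $\mathscr{K}$ from below along the segment joining $p_{2}$ to $p_{1}$. Set $h=p_{1}-p_{2}$. Taylor's formula with integral remainder gives
\begin{align*}
\eta(p_{1}\vert p_{2})=\int_{0}^{1}(1-s)\,\nabla^{2}\mathscr{K}(p_{2}+sh):h\otimes h\ \dd s .
\end{align*}
A direct computation gives $\nabla^{2}\mathscr{K}(z)=\frac{1}{\sqrt{1+\vert z\vert^{2}}}\big(I-\frac{z\otimes z}{1+\vert z\vert^{2}}\big)$. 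Its smallest eigenvalue, attained in the direction of $z$, is $(1+\vert z\vert^{2})^{-3/2}$. Hence
\begin{align*}
\nabla^{2}\mathscr{K}(z):h\otimes h\geq \frac{\vert h\vert^{2}}{(1+\vert z\vert^{2})^{3/2}} .
\end{align*}
This lower bound decays like $\vert z\vert^{-3}$, however. Integrating it naively along the segment gives only $\eta\gtrsim \vert h\vert^{2}/(1+\vert h\vert)^{3}$ for large $\vert h\vert$, which is too weak. So the isotropic bound alone will not suffice, and the main obstacle is the large-$\vert h\vert$ regime.

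\textbf{Small $\vert h\vert$.} Assume $\vert h\vert\leq 1$. Then $\vert p_{2}+sh\vert\leq R+1$ on the whole segment, so the eigenvalue bound gives $\eta\geq \frac{1}{2}(1+(R+1)^{2})^{-3/2}\vert h\vert^{2}$. Since $\vert h\vert^{2}\geq \vert h\vert^{2}/(1+\vert h\vert)$, this settles the regime $\vert h\vert\leq 1$.

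\textbf{Large $\vert h\vert$.} Assume $\vert h\vert>1$. I would avoid the Hessian and use the explicit form of $\eta$ directly. Since $\vert\nabla\mathscr{K}(p_{2})\vert=\vert v(p_{2})\vert=\frac{\vert p_{2}\vert}{\sqrt{1+\vert p_{2}\vert^{2}}}\leq \frac{R}{\sqrt{1+R^{2}}}=:\theta_{R}<1$, we have
\begin{align*}
\eta(p_{1}\vert p_{2})\geq \sqrt{1+\vert p_{1}\vert^{2}}-\sqrt{1+\vert p_{2}\vert^{2}}-\theta_{R}\vert h\vert\geq \vert p_{1}\vert-\sqrt{1+R^{2}}-\theta_{R}\vert h\vert .
\end{align*}
Using $\vert p_{1}\vert\geq \vert h\vert-R$, this becomes
\begin{align*}
\eta(p_{1}\vert p_{2})\geq (1-\theta_{R})\vert h\vert-R-\sqrt{1+R^{2}},
\end{align*}
which is at least $\frac{1-\theta_{R}}{2}\vert h\vert$ once $\vert h\vert\geq M_{R}:=2(R+\sqrt{1+R^{2}})/(1-\theta_{R})$. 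For $\vert h\vert\geq \max(M_{R},1)$, Lemma \ref{elementary ine about min} gives $\vert h\vert=\min\{\vert h\vert,\vert h\vert^{2}\}\geq \vert h\vert^{2}/(1+\vert h\vert)$, so the claim holds with constant $\frac{1-\theta_{R}}{2}$.

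\textbf{Intermediate range $1\leq \vert h\vert\leq M_{R}$.} Here the segment stays in the ball $B_{R+M_{R}}(0)$, so the Hessian argument from the small-$\vert h\vert$ case applies again. It gives $\eta\geq c\vert h\vert^{2}$ with $c=\frac{1}{2}(1+(R+M_{R})^{2})^{-3/2}$, which again dominates $c\vert h\vert^{2}/(1+\vert h\vert)$.

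Taking $C_{R}$ to be the minimum of the three constants proves the lemma. Non-negativity of $\eta$ for arbitrary $p_{1},p_{2}$ follows by choosing $R>\vert p_{2}\vert$, or directly from the convexity of $\mathscr{K}$, since its Hessian is positive definite.
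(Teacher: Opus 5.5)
Your proof is correct and is essentially the paper's argument. For bounded $|p_1-p_2|$ you use a lower bound on the smallest Hessian eigenvalue along the segment; your separate case $|h|\le 1$ is subsumed by the intermediate one. For large $|p_1-p_2|$ you use an explicit linear lower bound driven by $|v(p_2)|\le R/\sqrt{1+R^2}<1$, applying Cauchy--Schwarz to $v(p_2)\cdot h$ where the paper instead uses the identity $\nabla\mathscr{K}(p_2)\cdot p_2-\mathscr{K}(p_2)=-(1+|p_2|^2)^{-1/2}$.

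Your motivating remark that the isotropic Hessian bound alone cannot suffice is too pessimistic, though your proof does not depend on it. For $|h|\ge 1$, keep only $s\in[0,1/|h|]$ in the integral, where $|p_2+sh|\le R+1$; this already gives $\eta(p_1\vert p_2)\ge \frac{|h|}{2}(1+(R+1)^2)^{-3/2}$. So a single Hessian argument handles all regimes.
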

\begin{proof}
In view of Lemma \ref{elementary ine about min} it suffices to prove that there is some $C_{R}>0$ such that for all $(p_{1},p_{2})\in \mathbb{R}^{d}\times B_{R}(0)$ it holds that 
\begin{align}
\eta(p_{1}\vert p_{2})\geq C_{R}\min\{\left\vert p_{1}-p_{2}\right\vert ,\left\vert p_{1}-p_{2}\right\vert^{2} \}. \label{eta requested inequality}   \end{align}
To achieve this inequality we proceed via the following steps. \\
\textbf{Calculation of the eigenvalues of $\nabla^{2}\mathscr{K}(z)$.} We claim that for any $z\neq 0$ the Hessian matrix $\nabla^{2}\mathscr{K}(z)$ has exactly $2$ positive eigenvalues (up to multiplicity) given by 
\begin{align}
\lambda_{\max}(z)=\frac{1}{\sqrt{1+\left\vert z\right\vert^{2}}} \ \mbox{and}\ \lambda_{\min}(z)=\frac{1}{(1+\left\vert z\right\vert^{2})^{\frac{3}{2}}}. \label{formula for eigenvalues}    
\end{align}
First, by direct calculation we find that the Hessian of $\mathscr{K}$ is given by 
\begin{align*}
\nabla^{2}\mathscr{K}(z)=\frac{1}{\sqrt{1+\left\vert z\right\vert^{2} }}I-\frac{z\otimes z}{(1+\left\vert z\right\vert^{2})^{\frac{3}{2}}}.     
\end{align*}
If $v\perp z$ then we have 
\begin{align*}
(z\otimes z)v=(z\cdot v)z=0.   
\end{align*}
Therefore it follows that 
\begin{align*}
\nabla^{2}\mathscr{K}(z)v=\frac{1}{\sqrt{1+\left\vert z\right\vert^{2}}}Iv=\frac{1}{\sqrt{1+\left\vert z\right\vert^{2}}}v.
\end{align*}
It follows that $\frac{1}{\sqrt{1+\left\vert z\right\vert^{2}}}$ is an eigenvalue with eigenspace $z^{\perp}$. Therefore, since $\nabla^{2}\mathscr{K}(z)$ is symmetric the multiplicity of $\frac{1}{\sqrt{1+\left\vert z\right\vert^{2}}}$ is $\dim(z^{\perp})=d-1$. On the other hand, using the identity $$(z\otimes z)z=\left\vert z\right\vert^{2}z$$ we get  
\begin{align*}
\nabla^{2}\mathscr{K}(z)z=\frac{1}{\sqrt{1+\left\vert z\right\vert^{2} }}z-\frac{\left\vert z\right\vert^{2} }{(1+\left\vert z\right\vert^{2} )^{\frac{3}{2}}}z=\left(\frac{1}{\sqrt{1+\left\vert z\right\vert^{2}}}-\frac{\left\vert z\right\vert^{2}}{(1+\left\vert z\right\vert^{2} )^{\frac{3}{2}}}\right)z=\frac{1}{(1+\left\vert z\right\vert^{2})^{\frac{3}{2}}}z.    
\end{align*}
It follows that $\frac{1}{(1+\left\vert z\right\vert^{2})^{\frac{3}{2}}}$ is an eigenvalue with eigenspace $\mathrm{span}\{z\}$. Since $z\neq 0$ both of the eigenvalues we found are distinct. Moreover $\dim(z^{\perp})=d-1$ while $\dim(\mathrm{span}\{z\}{})=1$ and therefore these are the only eigenvalues. 
To conclude, for any $z\neq 0$ the Hessian  $\nabla^{2}\mathscr{K}(z)$ has exactly two distinct eigenvalues given by \ref{formula for eigenvalues}.
We will now separate between two regions: Fix $M=M_{R}$ to be chosen later.  We will first consider the case where $\left\vert p_{1}-p_{2}\right\vert\leq M $ and then the case where $\left\vert p_{1}-p
_{2}\right\vert\geq M$. We may assume with no loss of generality that $p_{1}\neq p_{2}$. \\ 
\textbf{The case where $\left\vert p_{1}-p_{2}\right\vert\leq M$}. By the theorem of Taylor-Hadamard we have 
\begin{align*}
\eta(p_{1}\vert p_{2})&=\mathscr{K}(p_{1})-\mathscr{K}(p_{2})-\nabla \mathscr{K}(p_{2})\cdot (p_{1}-p_{2})\\
&=\int_{0}^{1}(\nabla \mathscr{K}(p_{2}+s(p_{1}-p_{2}))-\nabla \mathscr{K}(p_{2}))\cdot (p_{1}-p_{2})\ \dd s\\
&= \int_{0}^{1} s(p_{1}-p_{2})^{T}\nabla^{2}\mathscr{K}(\xi_{s})(p_{1}-p_{2})\ \dd s
\end{align*}
where in the last equation we applied the intermediate value theorem with an intermediate point $\xi_{s}=(1-\theta )p_{2}+\theta(p_{2}+s(p_{1}-p_{2}))=(1-\theta s)p_{2}+\theta sp_{1}$ for some $\theta \in [0,1]$. Consequently, we deduce that \footnote{Recall that if $A$ is a matrix with minimal eigenvalue $\lambda_{\min}$ then $\lambda_{\min}=\underset{\left\vert v\right\vert=1 }{\min}v^{T}Av$. }  
\begin{align}
\eta(p_{1}\vert p_{2})\geq \left\vert p_{1}-p_{2}\right\vert^{2} \int_{0}^{1}s\min_{\left\vert v\right\vert=1 }v^{T}\nabla^{2}\mathscr{K}(\xi_{s})v\ \dd s
=\left\vert p_{1}-p_{2}\right\vert^{2} \int_{0}^{1}s\lambda_{\min}(\xi_{s})\ \dd s. \label{eta(p1p2) est in term integral}   
\end{align}
In view of \eqref{formula for eigenvalues} we have that 
\begin{align*}
\int_{0}^{1}s\lambda_{\min}(\xi_{s})\ \dd s&=\int_{0}^{1}\frac{s}{(1+\left\vert \xi_{s}\right\vert^{2})^{\frac{3}{2}}}\ \dd s\\
&=\int_{0}^{1}\frac{s}{(1+\left\vert (1-\theta s)p_{2}+\theta s p_{1}\right\vert^{2})^{\frac{3}{2}}}\ \dd s.    \end{align*}
In addition, note that  
\begin{align*}
\left\vert (1-\theta s)p_{2}+\theta sp_{1}\right\vert^{2}&\leq 2(1-\theta s)^{2}\left\vert p_{2}\right\vert^{2}+2\theta^{2}s^{2} \left\vert p_{1}\right\vert^{2}\\
&\leq 2\left\vert p_{2}\right\vert^{2}+4(\left\vert p_{1}-p_{2}\right\vert^{2}+\left\vert p_{2}\right\vert^{2} 
)\leq 6R^{2}+4M^{2}.     \end{align*}
Hence, we infer  
\begin{align}
\int_{0}^{1} s\lambda_{\min}(\xi_{s})\ \dd s\geq \frac{1}{(1+6R^{2}+4M^{2})^{\frac{3}{2}}}. \label{lower bound on integral}    
\end{align}
Substituting \eqref{lower bound on integral} inside \eqref{eta(p1p2) est in term integral} we conclude that 
\begin{align}
\eta(p_{1}\vert p_{2})\geq \frac{1}{(1+6R^{2}+4M^{2})^{\frac{3}{2}}}\left\vert p_{1}-p_{2}\right\vert^{2}. \label{quadratic est on eta}   
\end{align}
\textbf{The case where $\left\vert p_{1}-p_{2}\right\vert\geq M $.} First, we claim to have the inequality 
\begin{align}
\mathscr{K}(p_{1})-\nabla \mathscr{K}(p_{2})\cdot p_{1}\geq \left\vert p_{1}\right\vert\left(1-\frac{R}{\sqrt{1+R^{2}}}\right).\label{inequality for K}     
\end{align}
Indeed 
\begin{align*}
\mathscr{K}(p_{1})-\nabla \mathscr{K}(p_{2})\cdot p_{1}&=\sqrt{1+\left\vert p_{1}\right\vert^{2}}-\frac{p_{2}\cdot p_{1}}{\sqrt{1+\left\vert p_{2}\right\vert^{2}  }}\\
&\geq \left\vert p_{1}\right\vert-\frac{\left\vert p_{1}\right\vert\left\vert p_{2}\right\vert  }{\sqrt{1+\left\vert p_{2}\right\vert^{2}}}=\left\vert p_{1}\right\vert\left(1-\frac{\left\vert p_{2}\right\vert}{\sqrt{1+\left\vert p_{2}\right\vert^{2} }}\right).       
\end{align*}
Since $r\mapsto \frac{r}{\sqrt{1+r^{2}}}$ is increasing and $\left\vert p_{2}\right\vert\leq R$ it follows that 
\begin{align*}
1-\frac{\left\vert p_{2}\right\vert}{\sqrt{1+\left\vert p_{2}\right\vert^{2}}}\geq 1-\frac{R}{\sqrt{1+R^{2}}}    
\end{align*}
which establishes \eqref{inequality for K}. 
Owing to \eqref{inequality for K} and noticing the identity 
$$\nabla \mathscr{K}(p_{2})\cdot p_{2}-\mathscr{K}(p_{2})=\frac{\left\vert p_{2}\right\vert^{2} }{\sqrt{1+\left\vert p_{2}\right\vert^{2} }}-\sqrt{1+\left\vert p_{2} \right\vert^{2} }=-\frac{1}{\sqrt{1+\left\vert p_{2}\right\vert^{2}}}$$we infer that 
\begin{align*}
\eta(p_{1}\vert p_{2})&=
\mathscr{K}(p_{1})-\nabla \mathscr{K}(p_{2})\cdot p_{1}+\nabla \mathscr{K}(p_{2})\cdot p_{2}-\mathscr{K}(p_{2})\\
&\geq \left\vert p_{1}\right\vert\left(1-\frac{R}{\sqrt{1+R^{2}}}\right)-\frac{1}{\sqrt{1+\left\vert p_{2}\right\vert^{2}}}\\
&\geq \left\vert \left\vert p_{2}-p_{1}\right\vert-\left\vert p_{2}\right\vert\right\vert\left(1-\frac{R}{\sqrt{1+R^{2}}}\right)-\frac{1}{\sqrt{1+\left\vert p_{2}\right\vert^{2}}}. \end{align*}
Since $\left\vert p_{2}\right\vert\leq R $ and $\left\vert p_{1}-p_{2}\right\vert\geq M$, we have $\left\vert \left\vert p_{1}-p_{2}\right\vert -\left\vert p_{2}\right\vert \right\vert=\left\vert p_{1}-p_{2}\right\vert  -\left\vert p_{2}\right\vert $ provided $M\geq R$.  So, as long as $M\geq R$ we get 
\begin{align*}
\eta(p_{1}\vert p_{2})&\geq \left\vert p_{1}-p_{2}\right\vert\left(1-\frac{R}{\sqrt{1+R^{2}}}\right)-\left\vert p_{2}\right\vert\left(1-\frac{R}{\sqrt{1+R^{2}}}\right)-\frac{1}{\sqrt{1+\left\vert p_{2}\right\vert^{2}}}\\
&\geq \left\vert p_{1}-p_{2}\right\vert\left(1-\frac{R}{\sqrt{1+R^{2}}}\right)-R\left(1-\frac{R}{\sqrt{1+R^{2}}}\right)-1. 
\end{align*}
Setting 
\begin{align*}
\alpha_{R}\coloneqq 1-\frac{R}{\sqrt{1+R^{2}}}\ \mbox{and}\ \beta_{R}\coloneqq R\left(1-\frac{R}{\sqrt{1+R^{2}}}\right)+1    
\end{align*}
the latter inequality writes 
\begin{align*}
\eta(p_{1}\vert p_{2})\geq \alpha_{R}\left\vert p_{1}-p_{2}\right\vert-\beta_{R}.      
\end{align*}
Now set $M\coloneqq \max\{R,\frac{2\beta_{R}}{\alpha_{R}}\}$ and note that 
\begin{align*}
\alpha_{R}\left\vert p_{1}-p_{2}\right\vert-\beta_{R}\geq \frac{\alpha_{R}}{2}\left\vert p_{1}-p_{2}\right\vert\iff \left\vert p_{1}-p_{2}\right\vert\geq \frac{2\beta_{R}}{\alpha_{R}}.        
\end{align*}
So choosing  $M=\max\{R,\frac{2\beta_{R}}{\alpha_{R}}\}$ we conclude that 
\begin{align}
\eta(p_{1}\vert p_{2})\geq \frac{\alpha_{R}}{2}\left\vert p_{1}-p_{2}\right\vert. \label{linear lower bound}     
\end{align}
Combining \eqref{quadratic est on eta} with \eqref{linear lower bound} yields \eqref{eta requested inequality}. 
\end{proof}
We will also need the following inequalities, which reflect an additional particularity of the relativistic settings. 
\begin{lem}\label{elementary ine lemma}
Let $v$ and $\mathscr{K}$ be given by \eqref{def of v} and \eqref{def of rel kinetic part}, respectively. Then, there is some universal constant $C>0$ such that for all $p_{1},p_{2}\in \mathbb{R}^{d}$ it holds that: 
\begin{align}
\left\vert v(p_{1})-v(p_{2})\right\vert \left\vert p_{1}-p_{2}\right\vert\leq \frac{C\left\vert p_{1}-p_{2}\right\vert^{2}}{1+\left\vert p_{1}-p_{2}\right\vert} \label{first preliminary ine}    
\end{align}
and 
\begin{align}
\left\vert \nabla \mathscr{K}(p_{1})-\nabla \mathscr{K}(p_{2})-\nabla^{2}\mathscr{K}(p_{2})(p_{1}-p_{2})\right\vert \leq \frac{C\left\vert p_{1}-p_{2}\right\vert^{2}}{1+\left\vert p_{1}-p_{2}\right\vert}. \label{second preliminary ine}     
\end{align}
\end{lem}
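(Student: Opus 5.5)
By Lemma \ref{elementary ine about min}, the plan is to prove each inequality with the right-hand side replaced by $C\min\{r,r^{2}\}$, where $r=\left\vert p_{1}-p_{2}\right\vert$. This reduces each claim to two separate bounds: a quadratic bound $\leq Cr^{2}$, valid for all $p_{1},p_{2}$, and a linear bound $\leq Cr$, also valid for all $p_{1},p_{2}$. The minimum of the two is then controlled, and no case split in $r$ is needed. The constants must be universal, independent of $p_{2}$; this is what distinguishes the statement from Lemma \ref{coercivity positivity of eta}. It is achievable because every derivative of $\mathscr{K}$ of order at least one is globally bounded.

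For \eqref{first preliminary ine}, I would note that $v=\nabla\mathscr{K}$. By the eigenvalue computation in the proof of Lemma \ref{coercivity positivity of eta}, $D_{p}v=\nabla^{2}\mathscr{K}$ has all eigenvalues in $(0,1]$, so $v$ is $1$-Lipschitz. Hence $\left\vert v(p_{1})-v(p_{2})\right\vert r\leq r^{2}$. On the other hand, $\left\vert v\right\vert<1$ everywhere, so $\left\vert v(p_{1})-v(p_{2})\right\vert\leq 2$ and the product is at most $2r$. Together these give $\left\vert v(p_{1})-v(p_{2})\right\vert r\leq 2\min\{r,r^{2}\}\leq \frac{4r^{2}}{1+r}$.

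For \eqref{second preliminary ine}, the linear bound is again immediate. We have $\left\vert\nabla\mathscr{K}(p_{1})-\nabla\mathscr{K}(p_{2})\right\vert\leq 2$, and $\left\vert\nabla^{2}\mathscr{K}(p_{2})(p_{1}-p_{2})\right\vert\leq \lambda_{\max}(p_{2})\,r\leq r$. So the left side is at most $2+r$. This is bounded by $3r$ when $r\geq1$, while for $r<1$ the quadratic bound below takes over.

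The quadratic bound is the step requiring an actual computation, namely a global bound on $\nabla^{3}\mathscr{K}$. Using the Taylor integral remainder,
\begin{align*}
\nabla\mathscr{K}(p_{1})-\nabla\mathscr{K}(p_{2})-\nabla^{2}\mathscr{K}(p_{2})(p_{1}-p_{2})=\int_{0}^{1}(1-s)\nabla^{3}\mathscr{K}(p_{2}+s(p_{1}-p_{2}))[p_{1}-p_{2},p_{1}-p_{2}]\ \dd s,
\end{align*}
it suffices to show $\sup_{z}\left\vert\nabla^{3}\mathscr{K}(z)\right\vert<\infty$. I would differentiate the explicit formula $\nabla^{2}\mathscr{K}(z)=(1+\left\vert z\right\vert^{2})^{-1/2}I-(1+\left\vert z\right\vert^{2})^{-3/2}z\otimes z$. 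Each resulting term has the form of a homogeneous expression in $z$ of degree $k\in\{1,3\}$ times $(1+\left\vert z\right\vert^{2})^{-(k+2)/2}$. Such terms are bounded by $(1+\left\vert z\right\vert^{2})^{-1}\leq 1$, which gives a dimensional constant $C$ and the bound $\leq Cr^{2}$. Combining the linear and quadratic bounds with Lemma \ref{elementary ine about min} concludes the proof.
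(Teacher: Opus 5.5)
Your proposal is correct and follows essentially the paper's route. For each inequality you prove a linear and a quadratic bound and combine them via Lemma \ref{elementary ine about min}, using $|v|\leq 1$, the $1$-Lipschitz property of $v=\nabla\mathscr{K}$, and a Taylor remainder controlled by a global bound on $\nabla^{3}\mathscr{K}$; you verify that bound explicitly, whereas the paper only asserts it. The one minor difference is the linear bound in \eqref{second preliminary ine}: the paper applies the $1$-Lipschitz property of $\nabla\mathscr{K}$, which you already established, to get $\leq 2r$ directly, so the case split $r\geq 1$ versus $r<1$ you end up using is not needed.
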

\begin{proof}
We start with \eqref{first preliminary ine}. Since $v$ is Lipschitz with Lipschitz constant $1$ we have  
\begin{align*}
\left\vert v(p_{1})-v(p_{2})\right\vert\left\vert  p_{1}-p_{2}\right\vert\leq \left\vert p_{1}-p_{2}\right\vert^{2}.      
\end{align*}
In addition, since $\left\vert v(p)\right\vert\leq 1 $ it follows that 
\begin{align*}
\left\vert v(p_{1})-v(p_{2})\right\vert \left\vert p_{1}-p_{2}\right\vert\leq 2\left\vert p_{1}-p_{2}\right\vert.      
\end{align*}
Therefore we conclude that 
\begin{align*}
\left\vert v(p_{1})-v(p_{2})\right\vert \left\vert  p_{1}-p_{2}\right\vert\leq 2\min\{\left\vert p_{1}-p_{2}\right\vert,\left\vert p_{1}-p_{2}\right\vert^{2}\}. \end{align*}
In view of Lemma \ref{elementary ine about min} we infer that 
\begin{align*}
\left\vert v(p_{1})-v(p_{2})\right\vert \left\vert  p_{1}-p_{2}\right\vert\leq \frac{4\left\vert p_{1}-p_{2}\right\vert^{2}}{1+\left\vert p_{1}-p_{2}\right\vert}.      
\end{align*}
To prove \eqref{second preliminary ine} we first note that  $\left\Vert \nabla^{2}\mathscr{K}\right\Vert_{\infty}\leq 1$ and therefore 
\begin{align}
&\left\vert \nabla\mathscr{K}(p_{1})-\nabla\mathscr{K}(p_{2})-\nabla^{2}\mathscr{K}(p_{2})(p_{1}-p_{2})\right\vert \notag\\
&\leq \left\vert \nabla \mathscr{K}(p_{1})-\nabla \mathscr{K}(p_{2})\right\vert+\left\vert \nabla^{2} \mathscr{K}(p_{2})(p_{1}-p_{2})\right\vert\leq 2\left\vert p_{1}-p_{2}\right\vert. \label{linearbound}       
\end{align}
Moreover, by the theorem of Taylor-Hadamard we may write 

\begin{align*}
&\nabla^{2}\mathscr{K}(p_{2})(p_{1}-p_{2})-(\nabla \mathscr{K}(p_{1})-\nabla \mathscr{K}(p_{2}))\\
&= \int_{0}^{1}\left(\nabla^{2}\mathscr{K}(p_{2})-\nabla^{2}\mathscr{K}(p_{2}+s(p_{1}-p_{2}))\right)(p_{1}-p_{2})\ \dd s.   
\end{align*}    
Therefore, it follows that 
\begin{align}
&\left\vert \nabla^{2}\mathscr{K}(p_{2})(p_{1}-p_{2})-(\nabla \mathscr{K}(p_{1})-\nabla \mathscr{K}(p_{2}))\right\vert \notag\\
&\leq \int_{0}^{1} s\left\Vert D^{3}\mathscr{K}\right\Vert_{\infty}  \left\vert p_{1}-p_{2}\right\vert^{2}\ \dd s\leq C\left\vert p_{1}-p_{2}\right\vert^{2}.   \label{quadbound}  
\end{align}
Thus, collecting \eqref{linearbound}-\eqref{quadbound} we obtain 
\begin{align*}
\left\vert \nabla^{2}\mathscr{K}(p_{2})(p_{1}-p_{2})-(\nabla \mathscr{K}(p_{1})-\nabla \mathscr{K}(p_{2}))\right\vert&\leq \min\{2\left\vert p_{1}-p_{2}\right\vert,C\left\vert p_{1}-p_{2}\right\vert^{2} \}\\
&\leq \max\{2,C\}\min\{\left\vert p_{1}-p_{2}\right\vert ,\left\vert p_{1}-p_{2}\right\vert^{2}\}.    
\end{align*}
In view of Lemma \eqref{elementary ine about min} we deduce the estimate \eqref{second preliminary ine}. 
\end{proof}
We devote the remaining part of this section to the proof of Theorem \ref{weak strong stability intro}.

\vspace{0.4 cm}

\textbf{Proof of Theorem \ref{weak strong stability intro}.}
\textbf{Time variation of $\mathcal{K}$.} 
For brevity we set $v_{i}=v(p_{i})$. Integration by parts shows that 
\begin{align}
\frac{\dd}{\dd t} \mathcal{K}(t)=2\int_{\mathbb{T}^{d}} \partial_{t}\rho_{1}\eta(p_{1}\vert p_{2})\ \dd x+2\int_{\mathbb{T}^{d}} \rho_{1} \partial_{t}\eta(p_{1}\vert p_{2})\ \dd x =2\int_{\mathbb{T}^{d}} \rho_{1}(\partial_{t}+v_{1}\cdot \nabla)\eta(p_{1}\vert p_{2})\ \dd x. 
\label{first time der of K}
\end{align}
Let $\mathbf{D}_{i}$ be the linear differential operator defined by  $\mathbf{D}_{i}\coloneqq\partial_{t}+v_{i}D_{x}$. 
We compute that 
\begin{align*}
(\partial_{t}+v_{1}\cdot \nabla)\eta(p_{1}\vert p_{2})&=(\partial_{t}+v_{1}\cdot \nabla)\left[\mathscr{K}(p_{1})-\mathscr{K}(p_{2})-\nabla{\mathscr{K}}(p_{2})\cdot (p_{1}-p_{2})\right]\\
&=\nabla \mathscr{K}(p_{1})\cdot \mathbf{D}_{1}p_{1}-\nabla\mathscr{K}(p_{2})\cdot \mathbf{D}_{1}p_{2}\\
&-(\nabla^{2}\mathscr{K}(p_{2})\mathbf{D}_{1}p_{2})\cdot (p_{1}-p_{2})-\nabla\mathscr{K}(p_{2})\cdot \mathbf{D}_{1}(p_{1}-p_{2}).
\end{align*}
Observing that $\nabla \mathscr{K}=v$ we may recast $\nabla \mathscr{K}(p_{1})\cdot \mathbf{D}_{1}p_{1}$ as follows:  
\begin{align*}
&\nabla\mathscr{K}(p_{1})\cdot \mathbf{D}_{1}p_{1}\\
&=(\nabla \mathscr{K}(p_{1})-\nabla\mathscr{K}(p_{2}))\cdot \mathbf{D}_{1}p_{1}+\nabla\mathscr{K}(p_{2})\cdot \mathbf{D}_{1}p_{1}\\
&=(\nabla \mathscr{K}(p_{1})-\nabla \mathscr{K}(p_{2}))\cdot(\mathbf{D}_{1}p_{1}-\mathbf{D}_{2}p_{2})\\
&+(\nabla \mathscr{K}(p_{1})-\nabla\mathscr{K}(p_{2}))\cdot \mathbf{D}_{2}p_{2}+\nabla\mathscr{K}(p_{2})\cdot\mathbf{D}_{1}p_{1}\\
&=(v_{1}-v_{2})\cdot \nabla V\ast(\rho_{2}-\rho_{1})-(\nabla \mathscr{K}(p_{1})-\nabla\mathscr{K}(p_{2}))\cdot\nabla V\ast \rho_{2}+\nabla\mathscr{K}(p_{2})\cdot \mathbf{D}_{1}p_{1}\\
&=(v_{1}-v_{2})\cdot \nabla V\ast (\rho_{2}-\rho_{1})-(\nabla \mathscr{K}(p_{1})-\nabla \mathscr{K}(p_{2}))\cdot\nabla V\ast \rho_{2}+\nabla\mathscr{K}(p_{2})\cdot\mathbf{D}_{1}p_{1}.
\end{align*}
Therefore we find that 
\begin{align*}
&(\partial_{t}+v_{1}\cdot \nabla)\eta(p_{1}\vert p_{2})\\
&=(v_{1}-v_{2})\cdot\nabla V\ast (\rho_{2}-\rho_{1})-(\nabla\mathscr{K}(p_{1})-\nabla\mathscr{K}(p_{2}))\cdot\nabla V\ast \rho_{2}+\nabla\mathscr{K}(p_{2})\cdot\mathbf{D}_{1}p_{1}\\
&-\nabla\mathscr{K}(p_{2})\cdot \mathbf{D}_{1}p_{2}-(\nabla^{2}\mathscr{K}(p_{2})\mathbf{D}_{1}p_{2})\cdot(p_{1}-p_{2})-\nabla\mathscr{K}(p_{2})\cdot \mathbf{D}_{1}(p_{1}-p_{2})\\
&=(v_{1}-v_{2})\cdot \nabla V\ast (\rho_{2}-\rho_{1})-(\nabla\mathscr{K}(p_{1})-\nabla\mathscr{K}(p_{2}))\cdot \nabla V\ast \rho_{2}\\
&-(\nabla^{2}\mathscr{K}(p_{2})(\mathbf{D}_{1}p_{2}-\mathbf{D}_{2}p_{2}))\cdot (p_{1}-p_{2})-(\nabla^{2}\mathscr{K}(p_{2})\mathbf{D}_{2}p_{2})\cdot (p_{1}-p_{2})\\
&+\nabla\mathscr{K}(p_{2})\cdot\mathbf{D}_{1}p_{1}-\nabla\mathscr{K}(p_{2})\cdot \mathbf{D}_{1}p_{2}-\nabla\mathscr{K}(p_{2})\cdot \mathbf{D}_{1}(p_{1}-p_{2})\\
&=(v_{1}-v_{2})\cdot\nabla V\ast (\rho_{2}-\rho_{1})+\nabla V\ast \rho_{2}\cdot \left(\nabla^{2}\mathscr{K}(p_{2})(p_{1}-p_{2})-(\nabla\mathscr{K}(p_{1})-\nabla\mathscr{K}(p_{2}))\right)\\
&-
(\nabla^{2}\mathscr{K}(p_{2})(\mathbf{D}_{1}-\mathbf{D}_{2})p_{2})\cdot (p_{1}-p_{2})+\nabla \mathscr{K}(p_{2})\cdot \mathbf{D}_{1}(p_{1}-p_{2})-\nabla \mathscr{K}(p_{2})\cdot \mathbf{D}_{1}(p_{1}-p_{2})\\
&=(v_{1}-v_{2})\cdot \nabla V\ast (\rho_{2}-\rho_{1})+\nabla V\ast \rho_{2}\cdot (\nabla^{2}\mathscr{K}(p_{2})(p_{1}-p_{2})-(\nabla \mathscr{K}(p_{1})-\nabla \mathscr{K}(p_{2})))\\
&-(\nabla^{2} \mathscr{K}(p_{2})(\mathbf{D}_{1}-\mathbf{D}_{2})p_{2})\cdot (p_{1}-p_{2}). 
\end{align*}
We compute that 
\begin{align*}
(\mathbf{D}_{1}-\mathbf{D}_{2})p_{2}=(\partial_{t}p_{2}+v_{1}D_{x}p_{2})-(\partial_{t}p_{2}+v_{2}D_{x}p_{2})=(v_{1}-v_{2})D_{x}p_{2}.     
\end{align*}
So, using that $D_{x}(v(p))=D_{x}p\nabla^{2}\mathscr{K}(p)$ we get 
\begin{align}
(\partial_{t}+v_{1}\cdot \nabla)\eta(p_{1}\vert p_{2})&=(v_{1}-v_{2})\cdot \nabla V\ast (\rho_{2}-\rho_{1}) \notag\\
&+\nabla V\ast \rho_{2}\cdot (\nabla ^{2}\mathscr{K}(p_{2})(p_{1}-p_{2})-(\nabla \mathscr{K}(p_{1})-\nabla \mathscr{K}(p_{2}))) \notag\\
&-(v_{1}-v_{2})\cdot ((p_{1}-p_{2})D_{x}v_{2}). \label{material der of eta}
\end{align}
Hence, substituting \eqref{material der of eta} in  \eqref{first time der of K} yields  
\begin{align}
\frac{\dd}{\dd t}\mathcal{K}(t)=&2\int_{\mathbb{T}^{d}}\rho_{1}(v_{1}-v_{2})\cdot \nabla V\ast (\rho_{2}-\rho_{1})\ \dd x \notag\\
&+2\int_{\mathbb{T}^{d}}\rho_{1}\nabla V\ast \rho_{2}\cdot \left(\nabla^{2}\mathscr{K}(p_{2})(p_{1}-p_{2})-(\nabla \mathscr{K}(p_{1})-\mathscr{\nabla}\mathscr{K}(p_{2}))\right)\ \dd x \notag\\
&-2\int_{\mathbb{T}^{d}}\rho_{1}(v_{1}-v_{2})\cdot ((p_{1}-p_{2})D_{x}v_{2})\ \dd x. \label{Final calculation of evolution of kinetic part}   
\end{align}
\textbf{Time Variation of $\mathcal{V}$.} Using that $V$ is even we compute that 
\begin{align}
\frac{\dd}{\dd t}\mathcal{V}(t)&=2\int_{\mathbb{T}^{d}} V\ast (\rho_{1}-\rho_{2})\partial_{t}(\rho_{1}-\rho_{2})\ \dd x \notag\\ 
&=-2\int_{\mathbb{T}^{d}} V\ast (\rho_{1}-\rho_{2})\mathrm{div}(\rho_{1}v_{1}-\rho_{2}v_{2})\ \dd x \notag\\
&=2\int_{\mathbb{T}^{d}} \nabla V\ast (\rho_{1}-\rho_{2})\cdot(\rho_{1}v_{1}-\rho_{2}v_{2})\ \dd x \notag\\
&=2\int_{\mathbb{T}^{d}} \rho_{1}(v_{1}-v_{2})\cdot \nabla V\ast (\rho_{1}-\rho_{2})\ \dd x+2\int_{\mathbb{T}^{d}} v_{2}\cdot \nabla V\ast (\rho_{1}-\rho_{2})(\rho_{1}-\rho_{2})\ \dd x.  
\label{Claculation of variation of V}
\end{align}
Combining \eqref{Final calculation of evolution of kinetic part} and \eqref{Claculation of variation of V} we obtain 
\begin{align*}
\frac{\dd}{\dd t}\mathcal{E}(t)=&2\int_{\mathbb{T}^{d}} v_{2}\cdot \nabla V\ast (\rho_{1}-\rho_{2})(\rho_{1}-\rho_{2})\ \dd x\\
&+2\int_{\mathbb{T}^{d}}\rho_{1}(v_{1}-v_{2})\cdot ((p_{1}-p_{2})D_{x}v_{2})\ \dd x\\
&+2\int_{\mathbb{T}^{d}}\rho_{1}\nabla V\ast \rho_{2}\cdot \left(\nabla^{2}\mathscr{K}(p_{2})(p_{1}-p_{2})-(\nabla \mathscr{K}(p_{1})-\mathscr{\nabla}\mathscr{K}(p_{2}))\right)\ \dd x \coloneqq \sum_{k=1}^{3}I_{k}.     
\end{align*}
This establishes the identity \eqref{statement of formula for der of H}.\\
\textbf{Estimate on the  $I_{k}$.} In what follows the constant $C>0$ designates a constant depending only on $a,d,\left\Vert \rho_{2}\right\Vert_{\infty} ,\left\Vert p_{2}\right\Vert_{\infty}, \left\Vert D_{x}p_{2}\right\Vert_{\infty}$ and may vary between the different terms. 
To estimate $I_{1}$, note that 
\begin{align}
I_{1}&=-2\int_{\mathbb{T}^{d}} v_{2}\cdot \nabla V\ast (\rho_{1}-\rho_{2})(\rho_{1}-\rho_{2})\ \dd x \notag\\
&=2\int_{\mathbb{T}^{d}} v_{2}\cdot \nabla V\ast (\rho_{1}-\rho_{2})\mathrm{div}(\nabla V\ast (\rho_{1}-\rho_{2}))\ \dd x \notag\\
&=-2\int_{\mathbb{T}^{d}}D_{x}v_{2}\nabla V\ast(\rho_{1}-\rho_{2})\nabla V\ast (\rho_{1}-\rho_{2})\ \dd x-\int_{\mathbb{T}^{d}}v_{2}\cdot \nabla \left\vert \nabla V\ast(\rho_{1}-\rho_{2})\right\vert^{2}\ \dd x \notag
\\
&= -2\int_{\mathbb{T}^{d}}D_{x}v_{2}:\nabla V\ast (\rho_{1}-\rho_{2})\otimes \nabla V\ast (\rho_{1}-\rho_{2})\ \dd x+\int_{\mathbb{T}^{d}}\mathrm{div}(v_{2})\left\vert \nabla V\ast (\rho_{1}-\rho_{2})\right\vert^{2}\ \dd x \notag
\\ &\leq 3\left\Vert D_{x}v_{2}\right\Vert_{\infty} \mathcal{V}(t).    \label{I1EST MODULATED ENERGY}
\end{align}
To estimate $I_{2}$, we apply Lemma \ref{elementary ine lemma} in order to deduce   
\begin{align}
I_{2}&=-\int_{\mathbb{T}^
{d}} \rho_{1}(v_{1}-v_{2})\cdot((p_{1}-p_{2})D_{x}v_{2})\ \dd x \notag\\
&\leq \left\Vert D_{x}v_{2}\right\Vert_{\infty}\int_{\mathbb{T}^{d}}\rho_{1}\left\vert v_{1}-v_{2}\right\vert\left\vert p_{1}-p_{2}\right\vert\ \dd x
\leq C\left\Vert D_{x}v_{2}\right\Vert_{\infty} \int_{\mathbb{T}^{d}}\rho_{1}\frac{\left\vert p_{1}-p_{2}\right\vert^{2}}{1+\left\vert p_{1}-p_{2}\right\vert}\ \dd x. \notag 
\end{align}
Hence, by means of Lemma \ref{coercivity positivity of eta} we deduce 
\begin{align}
I_{2}\leq C\int_{\mathbb{T}^{d}}\rho_{1}\eta(p_{1}\vert p_{2})\ \dd x=C\mathcal{K}(t). \label{I2 est weak strong}      
\end{align}
To estimate $I_{3}$, we apply Lemma \ref{elementary ine lemma} and Young's inequality for convolutions to obtain 
\begin{align*}
I_{3}&\leq \left\Vert \nabla V\ast \rho_{2}\right\Vert_{\infty} \int_{\mathbb{T}^{d}}\rho_{1}\left\vert \nabla \mathscr{K}(p_{1})-\nabla \mathscr{K}(p_{2})-\nabla^{2}\mathscr{K}(p_{2})(p_{1}-p_{2})\right\vert\ \dd x\\
&\leq C\left\Vert \nabla V\right\Vert_{a'}\left\Vert \rho_{2}\right\Vert_{L^{\infty}L^{a}}\int_{\mathbb{T}^{d}} \rho_{1}\frac{\left\vert p_{1}-p_{2}\right\vert^{2}}{1+\left\vert p_{1}-p_{2}\right\vert}\ \dd x\leq C\int_{\mathbb{T}^{d}}\rho_{1}\frac{\left\vert p_{1}-p_{2}\right\vert^{2}}{1+\left\vert p_{1}-p_{2}\right\vert}\ \dd x. 
\end{align*}
Note that we used that $\nabla V\in L^{a'}(\mathbb{T}^{d})$ whenever $a>d$. 
Hence, by means of Lemma \ref{coercivity positivity of eta} we conclude that 
\begin{align}
I_{3}\leq C\int_{\mathbb{T}^{d}}\rho_{1}\eta(p_{1}\vert p_{2})\ \dd x\leq C\mathcal{K}(t).  
\label{I3 EST weak strong}     
\end{align}
Combining \eqref{I1EST MODULATED ENERGY}-\eqref{I3 EST weak strong} we conclude that 
\begin{align*}
\frac{\dd}{\dd t}\mathcal{E}(t)\leq C\mathcal{E}(t) \ \mbox{for some}\ C=C(a,d,\left\Vert D_{x}p_{2}\right\Vert_{\infty},\left\Vert p_{2}\right\Vert_{\infty},\left\Vert \rho_{2}\right\Vert_{L^{\infty}L^{a}}).    
\end{align*}
By Gr\"onwall's lemma and recalling that $\mathcal{E}(t)\geq 0$ we deduce the estimate \eqref{statement of est on H}. 
\qed
\section{Mean-field limit for relativistic $N$-body dynamics: Proof of Theorem \ref{main thm}}\label{Mean field limit section}
The general strategy underpinning the proof of the mean-field limit described in Theorem \ref{main thm} is to renormalize the argument presented in the previous section and take advantage of the functional inequality in Proposition \ref{Commutator estimates}. The following proposition together with Lemma \eqref{coercivity positivity of eta} guarantees that $\mathcal{E}_{N}(t)$ is non-negative.  
\begin{prop}[\cite{duerinckx2020mean},\,Corollary 3.5]
Suppose that: 
\begin{itemize}
\item[{\rm(i)}]$\rho \in L^{\infty}(\mathbb{T}^{d});$ 
   \item[{\rm(ii)}]
   $\mathbf{X}_{N}=(X_{1},\cdots\!,X_{N})\in \Delta_{N}^{c}$ and $\rho_{N}=\frac{1}{N}\sum_{i=1}^{N}\delta_{X_{i}}$. 
\end{itemize}
Let $\mathcal{V}_{N}(\rho_{N},\rho)$ be given by \eqref{def of VN}. Then, it holds that 
\begin{align*}
\mathcal{V}_{N}(\rho_{N},\rho) +\mathcal{C}_{N}\geq 0 \ \mbox{with}\ \mathcal{C}_{N}=\frac{(1+\left\Vert \rho\right\Vert_{\infty})\log N}{N}\mathbf{1}_{d=2}+\frac{1+\left\Vert \rho\right\Vert_{\infty} }{N^{\frac{2}{d}}}\mathbf{1}_{d\geq 3}. 
\end{align*} 
\label{non negativity}
\end{prop}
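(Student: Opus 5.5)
The plan is to follow the smearing argument of Serfaty (as used in \cite{duerinckx2020mean}). Fix a length scale $\varepsilon\in(0,\frac14)$, to be chosen as $\varepsilon=N^{-1/d}$, and let $\delta^{(\varepsilon)}$ be the uniform probability measure on the sphere $\partial B_{\varepsilon}(0)\subset\mathbb{T}^{d}$. Set $V_{\varepsilon}\coloneqq V\ast\delta^{(\varepsilon)}$ and $\rho_{N}^{\varepsilon}\coloneqq\rho_{N}\ast\delta^{(\varepsilon)}$. The starting point is positivity of the smeared energy:
\begin{align*}
\int_{\mathbb{T}^{d}}V\ast(\rho_{N}^{\varepsilon}-\rho)\,(\rho_{N}^{\varepsilon}-\rho)\ \dd x=\sum_{k\neq 0}\widehat{V}(k)\big\vert\widehat{(\rho_{N}^{\varepsilon}-\rho)}(k)\big\vert^{2}\geq 0 .
\end{align*}
This holds because $\widehat{V}(k)=\frac{1}{4\pi^{2}\vert k\vert^{2}}>0$ for $k\neq 0$ and $\widehat{V}(0)=0$ by the zero-mean normalisation.

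Next, expand this nonnegative quantity into three groups of terms.
\begin{itemize}
\item The pair terms $\frac{1}{N^{2}}\sum_{i\neq j}(V\ast\delta^{(\varepsilon)}\ast\delta^{(\varepsilon)})(X_{i}-X_{j})$.
\item The diagonal terms $\frac{1}{N}(V\ast\delta^{(\varepsilon)}\ast\delta^{(\varepsilon)})(0)$.
\item The cross and $\rho$-terms $-2\int (V\ast\delta^{(\varepsilon)})\ast\rho\,\dd\rho_{N}+\int V\ast\rho\,\rho$.
\end{itemize}
Compare each group with the corresponding term of $\mathcal{V}_{N}(\rho_{N},\rho)$.

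For the pair terms, use the mean-value property: $-\Delta V=\delta_{0}-1$, so $V+\frac{\vert x\vert^{2}}{2d}$ is superharmonic on a small ball. Averaging over spheres therefore gives $V\ast\delta^{(\varepsilon)}\ast\delta^{(\varepsilon)}\leq V+C\varepsilon^{2}$ pointwise. Summing over $i\neq j$ shows the smeared pair terms are at most the unsmeared ones plus $C\varepsilon^{2}$.

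For the diagonal terms, the explicit local behaviour of $V$ ($-\frac{1}{2\pi}\log\vert x\vert$ in $d=2$, $c_{d}\vert x\vert^{2-d}$ in $d\geq3$, plus a smooth part) gives
\begin{align*}
(V\ast\delta^{(\varepsilon)}\ast\delta^{(\varepsilon)})(0)\leq C\vert\log\varepsilon\vert\,\mathbf{1}_{d=2}+C\varepsilon^{2-d}\,\mathbf{1}_{d\geq3}.
\end{align*}
For the cross terms, write $\int (V-V_{\varepsilon})\ast\rho\,\dd\rho_{N}$. The function $V-V_{\varepsilon}$ is supported, up to an $O(\varepsilon^{2})$ smooth correction, in $B_{\varepsilon}$, with $\int_{B_{\varepsilon}}\vert V-V_{\varepsilon}\vert\leq C\varepsilon^{2}$ (logarithmic in $d=2$, but still $O(\varepsilon^{2}\vert\log\varepsilon\vert)$ at worst). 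Hence $\Vert(V-V_{\varepsilon})\ast\rho\Vert_{\infty}\leq C\Vert\rho\Vert_{\infty}\varepsilon^{2}$, up to a harmless log factor.

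Putting these together yields
\begin{align*}
0\leq\mathcal{V}_{N}(\rho_{N},\rho)+\frac{C}{N}\Big(\vert\log\varepsilon\vert\mathbf{1}_{d=2}+\varepsilon^{2-d}\mathbf{1}_{d\geq3}\Big)+C(1+\Vert\rho\Vert_{\infty})\varepsilon^{2}.
\end{align*}
Choosing $\varepsilon=N^{-1/d}$ makes every error term of order $\frac{\log N}{N}$ when $d=2$ and $N^{-2/d}$ when $d\geq3$. This gives the claimed constant $\mathcal{C}_{N}$, with the precise numerical prefactors as in \cite{duerinckx2020mean}.

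The main obstacle is the pointwise comparison $V\ast\delta^{(\varepsilon)}\ast\delta^{(\varepsilon)}\leq V+C\varepsilon^{2}$ on the torus, together with the $L^{1}$ bound on $V-V_{\varepsilon}$. Both rest on superharmonicity of $V$ corrected by the neutralising background $-1$. I would handle this by decomposing $V=g+\phi$, where $g$ is the whole-space Coulomb kernel and $\phi$ is smooth near $0$. Newton's theorem then applies exactly to $g$, and Taylor expansion to second order handles $\phi$.
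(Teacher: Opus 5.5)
Your proposal is correct and follows the same route as the cited result: the paper gives no proof of its own and simply quotes \cite[Corollary 3.5]{duerinckx2020mean}, which rests on the same Serfaty-type smearing at scale $N^{-1/d}$ (positivity of the smeared energy, superharmonicity for the pair terms, the self-energy at the origin, and the $O(\varepsilon^{2})$ bound on $V-V_{\varepsilon}$ for the cross terms). Two small corrections are needed: the superharmonic function is $V-\frac{\vert x\vert^{2}}{2d}$ (locally $-\Delta(V-\frac{\vert x\vert^{2}}{2d})=\delta_{0}\geq 0$), not $V+\frac{\vert x\vert^{2}}{2d}$, although the corrected version gives exactly your conclusion $V\ast\delta^{(\varepsilon)}\ast\delta^{(\varepsilon)}\leq V+\frac{\varepsilon^{2}}{d}$; and your argument yields $\mathcal{C}_{N}$ only up to a $d$-dependent multiplicative constant, not with the literal prefactor $1$ you claim (in $d=2$ it also requires $N\geq 2$, since at $N=1$ one has $\mathcal{C}_{1}=0$ and the bound fails for concentrated $\rho$), but that weaker form is all the paper uses.
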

In addition we will use the following proposition in the final stage of the proof in order to deduce the convergence $\rho_{N}(t,\cdot)\underset{N\rightarrow \infty}{\rightharpoonup}\rho(t,\cdot)$ from the vanishing of $\mathcal{E}_{N}(t)$ as $N\rightarrow \infty$. 
\begin{prop}
\textup{\cite[Proposition 3.6]{duerinckx2020mean}}
Let the assumptions and notations of Proposition \ref{non negativity} hold. 
There are constants $C=C(d)>0$ and $\lambda=\lambda(d)>0$ such that for any $\varphi\in C^{\infty}(\mathbb{T}^{d})$ it holds that 
 \begin{align*}
&\int_{\mathbb{T}^{d}}\varphi(x)\left(\rho_{N}-\rho\right)(\dd x)\\
&\leq CN^{-\lambda}\left\Vert \nabla \varphi\right\Vert_{\infty} +C\left\Vert \nabla\varphi\right\Vert_{2} \left(\mathcal{V}_{N}(\rho_{N},\rho)+\frac{C(1+\left\Vert \rho \right\Vert_{\infty})}{N^{\frac{2}{d}}}\mathbf{1}_{d=3}+\frac{C(1+\left\Vert\rho \right\Vert_{\infty})\log N}{N}\mathbf{1}_{d=2}\right)^\frac{1}{2}.
 \end{align*}
 \label{coercivity inequality}
\end{prop}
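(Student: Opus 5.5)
The estimate follows the Duerinckx–Serfaty argument. I would first smear the point charges at a small scale so that the electric potential of $\rho_N-\rho$ becomes an $H^{1}$ function. Its Dirichlet energy is then compared with the renormalized energy $\mathcal{V}_N(\rho_N,\rho)$ up to the same kind of error terms as the constant $\mathcal{C}_N$ of Proposition \ref{non negativity}. The bound then follows by duality in $\dot H^{1}$.

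\textbf{Step 1: smearing.} Fix a scale $0<\varepsilon<\frac14$, to be chosen at the end. Let $\delta^{(\varepsilon)}_{X_i}$ be the uniform probability measure on the sphere $\partial B_{\varepsilon}(X_i)$, and set $\rho_{N,\varepsilon}=\frac1N\sum_{i}\delta^{(\varepsilon)}_{X_i}$. Write
\begin{align*}
\int\varphi\,(\rho_N-\rho)=\int\varphi\,(\rho_{N,\varepsilon}-\rho)+\frac1N\sum_{i=1}^N\int\varphi\,(\delta_{X_i}-\delta^{(\varepsilon)}_{X_i}).
\end{align*}
The second term is at most $\varepsilon\|\nabla\varphi\|_\infty$, since each smeared measure is supported within distance $\varepsilon$ of $X_i$. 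Next set $h_{N,\varepsilon}=V\ast(\rho_{N,\varepsilon}-\rho)$. Because $\rho_{N,\varepsilon}-\rho$ has zero mean, $-\Delta h_{N,\varepsilon}=\rho_{N,\varepsilon}-\rho$. Integrating by parts and applying Cauchy–Schwarz gives
\begin{align*}
\int\varphi\,(\rho_{N,\varepsilon}-\rho)\le\|\nabla\varphi\|_2\,\|\nabla h_{N,\varepsilon}\|_2 .
\end{align*}

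\textbf{Step 2: comparison of energies (the main obstacle).} I would prove that
\begin{align*}
\|\nabla h_{N,\varepsilon}\|_2^2\le \mathcal{V}_N(\rho_N,\rho)+\frac{V_\varepsilon(0)}{N}+C\|\rho\|_\infty\,\varepsilon^{2}(1+|\log\varepsilon|\mathbf 1_{d=2}),
\end{align*}
where $V_\varepsilon=V\ast\delta^{(\varepsilon)}_0$. Up to a bounded correction, $V_\varepsilon(0)$ equals $\varepsilon^{2-d}$ for $d\ge3$ and $-\log\varepsilon$ for $d=2$. The argument expands $\|\nabla h_{N,\varepsilon}\|_2^2=\int V\ast(\rho_{N,\varepsilon}-\rho)\,(\rho_{N,\varepsilon}-\rho)$ into three groups of terms.
\begin{itemize}
\item \emph{Diagonal terms.} These give exactly the self-energy $\frac1{N}V_\varepsilon(0)$ (up to the convention for $V_\varepsilon\ast\delta^{(\varepsilon)}$).
\item \emph{Off-diagonal pairs.} Each pair contributes $V_{\varepsilon}\ast\delta^{(\varepsilon)}_0(X_i-X_j)\le V(X_i-X_j)$. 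This follows from the mean-value property and the superharmonicity of $V$: $V$ is harmonic up to the constant $-1$ away from $0$, and $V_\varepsilon=V$ outside $B_\varepsilon$, up to an $O(\varepsilon^2)$ correction coming from the background $-1$ in $-\Delta V=\delta_0-1$.
\item \emph{Cross terms with $\rho$.} These differ from the unsmeared ones by $\int (V-V_\varepsilon)\ast\rho$. Since $V-V_\varepsilon$ is supported essentially in $B_\varepsilon$ and is controlled by $|V|$ there, this difference is bounded by $C\|\rho\|_\infty\varepsilon^2(1+|\log\varepsilon|\mathbf 1_{d=2})$.
\end{itemize}
The delicate points are exactly this superharmonicity and mean-value comparison on the torus, where the background $-1$ produces the small correction, and the bookkeeping of the cross terms.

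\textbf{Step 3: choice of scale.} Take $\varepsilon=N^{-1/d}$. Then $V_\varepsilon(0)/N$ is of order $N^{-2/d}$ in $d\ge3$ and $\frac{\log N}{dN}$ in $d=2$, and the term $\|\rho\|_\infty\varepsilon^2$ is of the same order. Together these reproduce the constants $C(1+\|\rho\|_\infty)N^{-2/d}$ and $C(1+\|\rho\|_\infty)\frac{\log N}{N}$ appearing in the statement. The smearing error from Step 1 gives $\varepsilon\|\nabla\varphi\|_\infty=N^{-1/d}\|\nabla\varphi\|_\infty$, so one may take $\lambda=1/d$. Combining Steps 1–3 yields the claimed inequality.
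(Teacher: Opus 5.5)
Your proposal is correct and is essentially the argument behind the cited \cite[Proposition 3.6]{duerinckx2020mean}, which the present paper quotes without proof: smear the charges on spheres of radius $N^{-1/d}$, pay $N^{-1/d}\left\Vert \nabla\varphi\right\Vert_{\infty}$ for the smearing, use $\dot H^{1}$ duality, and bound $\left\Vert \nabla h_{N,\varepsilon}\right\Vert_{2}^{2}$ by $\mathcal{V}_{N}(\rho_{N},\rho)$ plus the self-energy and truncation errors. The one point to phrase precisely is that on $\mathbb{T}^{d}$ the kernel is subharmonic (not superharmonic) away from $0$, so the comparison you need reads $V\ast\delta_{0}^{(\varepsilon)}\leq V+\frac{\varepsilon^{2}}{2d}$, because $y\mapsto V(x-y)-\frac{\left\vert y\right\vert^{2}}{2d}$ is superharmonic; this is exactly the $O(\varepsilon^{2})$ correction you anticipated, and your argument proves the version with $\mathbf{1}_{d\geq 3}$, which is what the $\mathbf{1}_{d=3}$ in the statement must mean.
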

We split the proof of Theorem \ref{main thm} into three steps. First, we prove the formula for the time variation of $\mathcal{E}_{N}(t)$ (identity \eqref{time der of renormalized relativistic modulated energy intro}), then we prove the evolution estimate \eqref{Gronwall est statement} and we conclude with the proof of the mean-field convergence \eqref{convergence as N  infty statement}. Each one of these steps reflects nontrivial adjustments in comparison to the non-relativistic settings.  

\vspace{0.4 cm}

\textbf{Proof of \ref{time der of renormalized relativistic modulated energy intro}.}
\textbf{Calculation of $\frac{\dd}{\dd t}\mathcal{K}_{N}(t)$.}
To make the equations lighter we omit the time variable whenever there is no ambiguity. We compute that 
\begin{align}
\frac{\dd}{\dd t}\mathcal{K}_{N}(t)&=\frac{\dd}{\dd t}\frac{2}{N}\sum_{i=1}^{N}(\mathscr{K}(P_{i})-\mathscr{K}(p(X_{i}))-\nabla \mathscr{K}(p(X_{i}))\cdot(P_{i}-p(X_{i}))) \notag\\
&=\frac{2}{N}\sum_{i=1}^{N}(\nabla \mathscr{K}(P_{i})\cdot \dot{P}_{i}(t)-\nabla\mathscr{K}(p(X_{i}))\cdot (\partial_{t}p(X_{i})+\dot X_{i}D_{x}p(X_{i}))) \notag\\
&-\frac{2}{N}\sum_{i=1}^{N}(\nabla ^{2}\mathscr{K}(p(X_{i}))(\partial_{t}p(X_{i})+\dot{X}_{i}D_{x}p(X_{i})))\cdot (P_{i}-p(X_{i})) \notag\\
&-\frac{2}{N}\sum_{i=1}^{N}\nabla \mathscr{K}(p(X_{i}))\cdot (\dot{P}_{i}-(\partial_{t}p(X_{i})+\dot{X}_{i}D_{x}p(X_{i}))). \label{right hand side of first kineticpart}
\end{align}
The first term in the right-hand side of \eqref{right hand side of first kineticpart} writes 
\begin{align*}
&\frac{2}{N}\sum_{i=1}^{N}\left(-\nabla \mathscr{K}(P_{i})\cdot \frac{1}{N}\sum_{j:j\neq i}\nabla V(X_{i}-X_{j})-\nabla \mathscr{K}(p(X_{i}))\cdot(\partial_{t}p(X_{i})+v(P_{i})D_{x}p(X_{i}))\right)\\
=&-\frac{2}{N^{2}}\sum_{i\neq j}\nabla \mathscr{K}(P_{i})\cdot \nabla V(X_{i}-X_{j})-\frac{2}{N}\sum_{i=1}^{N}\nabla \mathscr{K}(p(X_{i}))\cdot (\partial_{t}p(X_{i})+v(p(X_{i}))D_{x}p(X_{i}))\\
&-\frac{2}{N}\sum_{i=1}^{N}\nabla \mathscr{K}(p(X_{i}))\cdot (v(P_{i})-v(p(X_{i})))D_{x}p(X_{i})\\
=& \frac{2}{N}\sum_{i=1}^{N}\nabla \mathscr{K}(p(X_{i}))\cdot \nabla V\ast \rho(X_{i})-\frac{2}{N^{2}}\sum_{i\neq j}\nabla \mathscr{K}(P_{i})\cdot \nabla V(X_{i}-X_{j})\\
&-\frac{2}{N}\sum_{i=1}^{N}\nabla \mathscr{K}(p(X_{i}))\cdot (v(P_{i})-v(p(X_{i})))D_{x}p(X_{i})\coloneqq T_{1}^{1}+T_{2}^{1}+T_{3}^{1}. 
\end{align*}
The second term in the right-hand side of \eqref{right hand side of first kineticpart} writes 
\begin{align*}
&-\frac{2}{N}\sum_{i=1}^{N}\left(\nabla^{2}\mathscr{K}(p(X_{i}))(\partial_{t}p(X_{i})+v(P_{i})D_{x}p(X_{i}))\right)\cdot (P_{i}-p(X_{i}))\\
&=-\frac{2}{N}\sum_{i=1}^{N}\nabla^{2}\mathscr{K}(p(X_{i}))(v(P_{i})-v(p(X_{i})))D_{x}p(X_{i})\cdot (P_{i}-p(X_{i}))\\
&-\frac{2}{N}\sum_{i=1}^{N}\nabla^{2}\mathscr{K}(p(X_{i}))(\partial_{t}p(X_{i})+v(p(X_{i}))D_{x}p(X_{i}))\cdot(P_{i}-p(X_{i}))\\
&=\frac{2}{N}\sum_{i=1}^{N}(\nabla^{2}\mathscr{K}(p(X_{i}))\nabla V\ast \rho(X_{i}))\cdot (P_{i}-p(X_{i}))\\
&-\frac{2}{N}\sum_{i=1}^{N}\nabla^{2} \mathscr{K}(p(X_{i}))(v(P_{i})-v(p(X_{i})))D_{x}p(X_{i})\cdot (P_{i}-p(X_{i}))\\
&=\frac{2}{N}\sum_{i=1}^{N}(\nabla^{2}\mathscr{K}(p(X_{i}))\nabla V\ast \rho(X_{i}))\cdot (P_{i}-p(X_{i}))\\
&-\frac{2}{N}\sum_{i=1}^{N}(v(P_{i})-v(p(X_{i})))\cdot \left((P_{i}-p(X_{i}))D_{x}(v(p))(X_{i})\right)\coloneqq T^{2}_{1}+T^{2}_{2}. 
\end{align*}
The third term in the right-hand side of \eqref{right hand side of first kineticpart} is 
\begin{align*}
&-\frac{2}{N}\sum_{i=1}^{N}\nabla \mathscr{K}(p(X_{i}))\cdot \left(-\frac{1}{N}\sum_{j:j\neq i}\nabla V(X_{i}-X_{j})-(\partial_{t}p(X_{i})+v(P_{i})D_{x}p(X_{i}))\right)\\
&=\frac{2}{N^{2}}\sum_{i\neq j}\nabla \mathscr{K}(p(X_{i}))\cdot \nabla V(X_{i}-X_{j})+\frac{2}{N}\sum_{i=1}^{N}\nabla \mathscr{K}(p(X_{i}))\cdot \left((v(P_{i})-v(p(X_{i})))D_{x}p(X_{i})\right)\\
&+\frac{2}{N}\sum_{i=1}^{N}\nabla \mathscr{K}(p(X_{i}))\cdot (\partial_{t}p(X_{i})+v(p(X_{i}))D_{x}p(X_{i}))\\
&=-\frac{2}{N}\sum_{i=1}^{N}\nabla \mathscr{K}(p(X_{i}))\cdot \nabla V\ast \rho(X_{i})+\frac{2}{N^{2}}\sum_{i\neq j}\nabla \mathscr{K}(p(X_{i}))\cdot \nabla V(X_{i}-X_{j})\\
&+\frac{2}{N}\sum_{i=1}^{N}\nabla \mathscr{K}(p(X_{i}))\cdot (\left(v(P_{i})-v(p(X_{i}))\right)D_{x}p(X_{i}))
\\
&=-\frac{2}{N}\sum_{i=1}^{N}\nabla \mathscr{K}(P_{i})\cdot \nabla V\ast \rho(X_{i})-\frac{2}{N}\sum_{i=1}^{N}(\nabla \mathscr{K}(p(X_{i}))-\nabla \mathscr{K}(P_{i}))\cdot \nabla V\ast \rho(X_{i})\\
&+\frac{2}{N^{2}}\sum_{i\neq j}\nabla \mathscr{K}(p(X_{i}))\cdot \nabla V(X_{i}-X_{j})+\frac{2}{N}\sum_{i=1}^{N}\nabla \mathscr{K}(p(X_{i}))\cdot \left((v(P_{i})-v(p(X_{i})))D_{x}p(X_{i})\right)
\\&= T^{3}_{1}+T^{3}_{2}+T^{3}_{3}+T^{3}_{4}. 
\end{align*}
We compute that 
\begin{align}
&T^{1}_{1}+T^{2}_{1}+T^{3}_{1} \notag\\
&=  \frac{2}{N}\sum_{i=1}^{N}\nabla \mathscr{K}(p(X_{i}))\cdot \nabla V\ast \rho(X_{i}) \notag\\
&+\frac{2}{N}\sum_{i=1}^{N}(\nabla^{2}\mathscr{K}(p(X_{i}))\nabla V\ast \rho(X_{i}))(P_{i}-p(X_{i}))-\frac{2}{N}\sum_{i=1}^{N}\nabla \mathscr{K}(P_{i})\cdot \nabla V\ast \rho(X_{i}) \notag\\
&=\frac{2}{N}\sum_{i=1}^{N}\left(\nabla^{2} \mathscr{K}(p(X_{i}))(P_{i}-p(X_{i}))-(\nabla \mathscr{K}(P_{i})-\nabla \mathscr{K}(p(X_{i})))\right)\cdot \nabla V\ast \rho(X_{i}). \label{Calculation of T11 T21 T31}
\end{align}
In addition, we compute that  
\begin{align}
&T^{1}_{2}+T^{3}_{2}+T^{3}_{3} \notag\\
&=-\frac{2}{N^
{2}}\sum_{i\neq j}\nabla \mathscr{K}(P_{i})\cdot \nabla V(X_{i}-X_{j}) \notag\\
&-\frac{2}{N}\sum_{i=1}^{N}(\nabla \mathscr{K}(p(X_{i}))-\nabla \mathscr{K}(P_{i}))\cdot \nabla V\ast \rho(X_{i}) \notag\\
&+\frac{2}{N^{2}}\sum_{i\neq j}\nabla \mathscr{K}(p(X_{i}))\cdot \nabla V(X_{i}-X_{j}) \notag\\
&=\frac{2}{N}\sum_{i=1}^{N}(\nabla \mathscr{K}(P_{i})-\nabla \mathscr{K}(p(X_{i})))\cdot \left(\nabla V\ast \rho(X_{i})-\frac{1}{N}\sum_{j:j\neq i}\nabla V(X_{i}-X_{j})\right)\notag\\
&=\frac{2}{N}\sum_{i=1}^{N}(v(P_{i})-v(p(X_{i})))\cdot \left(\nabla V\ast \rho(X_{i})-\frac{1}{N}\sum_{j:j\neq i}\nabla V(X_{i}-X_{j})\right). 
\end{align}
Moreover, we have 
\begin{align}
T_{3}^{1}+T^{2}_{2}+T^{3}_{4}&=  -\frac{2}{N}\sum_{i=1}^{N}\nabla \mathscr{K}(p(X_{i}))\cdot \left((v(P_{i})-v(p(X_{i})))D_{x}p(X_{i})\right)\notag\\
&-\frac{2}{N}\sum_{i=1}^{N}(v(P_{i})-v(p(X_{i})))\left((P_{i}-p(X_{i}))D_{x}(v(p))(X_{i})\right) \notag\\
&+\frac{2}{N}\sum_{i=1}^{N}\nabla \mathscr{K}(p(X_{i}))\cdot \left((v(P_{i})-v(p(X_{i})))D_{x}p(X_{i})\right) \notag\\
&=-\frac{2}{N}\sum_{i=1}^{N}(v(P_{i})-v(p(X_{i})))(P_{i}-p(X_{i}))D_{x}(v(p))(X_{i}).
\label{Calculation T13T22T34} 
\end{align}
Gathering \eqref{Calculation of T11 T21 T31}-\eqref{Calculation T13T22T34} we obtain 
\begin{align}
\frac{\dd}{\dd t}\mathcal{K}_{N}(t)&= \frac{2}{N}\sum_{i=1}^{N}(v(P_{i})-v(p(X_{i})))\cdot \left(\nabla V\ast \rho(X_{i})-\frac{1}{N}\sum_{j:j\neq i}\nabla V(X_{i}-X_{j})\right)   
 \notag\\
&+\frac{2}{N}\sum_{i=1}^{N}\left(\nabla^{2} \mathscr{K}(p(X_{i}))(P_{i}-p(X_{i}))-(\nabla \mathscr{K}(P_{i})-\nabla \mathscr{K}(p(X_{i})))\right)\cdot \nabla V\ast \rho(X_{i}) \notag\\
&-\frac{2}{N}\sum_{i=1}^{N}(v(P_{i})-v(p(X_{i})))\left((P_{i}-p(X_{i}))D_{x}(v(p))(X_{i})\right). \label{time der of KN} 
\end{align}
\textbf{Calculation of $\frac{\dd}{\dd t}\mathcal{V}_{N}(t)$.} We have 
\begin{align*}
\frac{\dd}{\dd t}\mathcal{V}_{N}(t)=\frac{\dd}{\dd t}\left(\frac{1}{N^{2}}\sum_{i\neq j}V(X_{i}-X_{j})-\frac{2}{N}\sum_{i=1}^{N}V\ast \rho(t,X_{i})+\int_{\mathbb{T}^{d}}\rho V\ast\rho \ \dd x \right).    
\end{align*}
First, note that 
\begin{align}
\frac{\dd}{\dd t}\left(\frac{1}{N^{2}}\sum_{i\neq j}V(X_{i}-X_{j})\right)=\frac{1}{N^{2}}\sum_{i\neq j}\nabla V(X_{i}-X_{j})\cdot(v(P_{i})-v(P_{j})). \label{time der of pure product of empirical}     
\end{align}
In addition, we compute \begin{align}
&-\frac{\dd}{\dd t}\frac{2}{N}\sum_{i=1}^{N}V\ast \rho(X_{i}) \notag\\
&=-\frac{2}{N}\sum_{i=1}^{N} V\ast \partial_{t}\rho(X_{i})-\frac{2}{N}\sum_{i=1}^{N} \nabla V\ast \rho(X_{i})\cdot v(P_{i}) \notag\\
&=\frac{2}{N}\sum_{i=1}^{N}\nabla V\ast (\rho v(p))(X_{i})-\frac{2}{N}\sum_{i=1}^{N}\nabla V\ast \rho(X_{i})\cdot v(P_{i}) \notag\\
&=\frac{2}{N}\sum_{i=1}^{N}\int_{\mathbb{T}^{d}}\nabla V(X_{i}-y)\cdot \rho v(p(y))\ \dd y-  \frac{2}{N}\sum_{i=1}^{N}\nabla V\ast \rho(X_{i})\cdot v(P_{i}) \notag\\
&=2\int_{\mathbb{T}^{d}\times \mathbb{T}^{d}}\nabla V(x-y)\cdot \rho(y)v(p(y))\ \dd y \rho_{N}(\dd x)-  \frac{2}{N}\sum_{i=1}^{N}\nabla V\ast \rho(X_{i})\cdot v(P_{i}). \label{time der of mixed products}
\end{align}
Moreover, using that $V$ is even we have 
\begin{align}
\frac{\dd}{\dd t}\int_{\mathbb{T}^{d}}\rho V\ast \rho\ \dd x&=2\int_{\mathbb{T}^{d}}\partial_{t}\rho V\ast \rho \ \dd x \notag\\
&=-2\int_{\mathbb{T}^{d}}\mathrm{div}(\rho v(p))V\ast \rho \ \dd x=2\int_{\mathbb{T}^{d}} \rho v(p)\cdot \nabla V\ast \rho \ \dd x \notag\\
&=2\int_{\mathbb{T}^{d}\times \mathbb{T}^{d}} v(p(x))\cdot \nabla V(x-y)\rho(y)\rho(x)\ \dd x\dd y \notag\\
&=\int_{\mathbb{T}^{d}\times \mathbb{T}^{d}}(v(p(x))-v(p(y)))\cdot \nabla V(x-y)\rho(x)\rho(y)\ \dd x\dd y.   \label{time der of pure product of limit}
\end{align}
To conclude, gathering \eqref{time der of pure product of empirical}-\eqref{time der of pure product of limit} we arrive at the following identity 
\begin{align}
\frac{\dd}{\dd t}\mathcal{V}_{N}(t)=
&-\int_{\Delta^{c}}(v(p(x))-v(p(y)))\cdot \nabla V(x-y)\rho_{N}^{\otimes 2}(\dd x\dd y) \notag\\
&+2\int_{\mathbb{T}^{d}\times \mathbb{T}^{d}}(v(p(x))-v(p(y)))\cdot \nabla V(x-y)\rho(x)\ \dd x\rho_{N}(\dd y) \notag\\
&+2\int_{\mathbb{T}^{d}\times \mathbb{T}^{d}}\nabla V(x-y)\cdot v(p(y))\rho(y)\ \dd y\rho_{N}(\dd x)-\frac{2}{N}\sum_{i=1}^{N}\nabla V\ast \rho(t,X_{i})\cdot v(P_{i}) \notag\\
&+\frac{1}{N^{2}}\sum_{i\neq j}\nabla V(X_{i}-X_{j})\cdot (v(P_{i})-v(P_{j})) \notag\\
&+\int_{\Delta^{c}}(v(p(x))-v(p(y)))\cdot \nabla V(x-y)(\rho_{N}-\rho)^{\otimes 2}(\dd x\dd y)\coloneqq\sum_{k=1}^{6}L_{k}. 
\label{sum of the Lk}
\end{align}
\textbf{Conclusion.} 
First, we compute 
\begin{align*}
 &\frac{2}{N}\sum_{i=1}^{N}(v(P_{i})-v(p(X_{i})))\cdot \left(\nabla V\ast \rho(X_{i})-\frac{1}{N}\sum_{j:j\neq i}\nabla V(X_{i}-X_{j})\right) \\
 &=\frac{2}{N}\sum_{i=1}^{N}v(P_{i})\cdot \nabla V\ast \rho(X_{i})-\frac{2}{N^{2}}\sum_{i\neq j}v(P_{i})\cdot \nabla V(X_{i}-X_{j})\\
 &-\frac{2}{N}\sum_{i=1}^
 {N}v(p(X_{i}))\cdot \nabla V\ast \rho(X_{i})+\frac{2}{N^{2}}\sum_{i\neq j}v(p(X_{i}))\cdot \nabla V(X_{i}-X_{j})\coloneqq\sum_{k=1}^{4}M_{k}.  
\end{align*}
Since $\nabla V$ is odd we observe that 
\begin{align*}
\frac{1}{N^{2}}\sum_{i\neq j}\nabla V(X_{i}-X_{j})\cdot (v(P_{i})-v(P_{j}))=\frac{2}{N^{2}}\sum_{i\neq j}v(P_{i})\cdot \nabla V(X_{i}-X_{j})
\end{align*}
and so  
\begin{align}
L_{4}+L_{5}+M_{1}+M_{2}=0.  \label{first vanishing}   
\end{align}
In addition, by symmetrizing we obtain  
\begin{align*}
M_{4}=2\int_{\Delta^{c}}v(p(x))\cdot \nabla V(x-y)\rho_{N}^{\otimes 2}(\dd x\dd y)=\int_{\Delta^{c}}\left(v(p(x))-v(p(y))\right)\cdot \nabla V(x-y)\rho_{N}^{\otimes 2}(\dd x\dd y),   
\end{align*}
and so 
\begin{align}
M_{4}+L_{1}=0. \label{second vanishing}    
\end{align}
Furthermore, we compute that  
\begin{align*}
L_{2}+L_{3}=&-\frac{2}{N}\sum_{i=1}^{N} \nabla V\ast (\rho v(p))(X_{i})+\frac{2}{N}\sum_{i=1}^{N}\nabla V\ast (\rho v(p))(X_{i})\\
&+\frac{2}{N}\sum_{i=1}^{N}v(p(X_{i}))\cdot\nabla V\ast \rho(X_{i})=\frac{2}{N}\sum_{i=1}^{N}v(p(X_{i}))\cdot \nabla V \ast \rho(X_{i}),     
\end{align*}
so that 
\begin{align}
L_{2}+L_{3}+M_{3}=0. \label{third vanishing} \end{align}
In view of \eqref{time der of KN}, \eqref{sum of the Lk} and \eqref{first vanishing}-\eqref{third vanishing} we arrive at the identity 
\begin{align}
\frac{\dd}{\dd t}\mathcal{E}_{N}(t)&=\int_{\mathbb{T}^{d}\times \mathbb{T}^{d}}(v(p(x))-v(p(y)))\cdot \nabla V(x-y)(\rho_{N}-\rho)^{\otimes 2}(\dd x\dd y) \notag\\
&-\frac{2}{N}\sum_{i=1}^{N}(v(P_{i})-v(p(X_{i})))\left((P_{i}-p(X_{i}))D_{x}(v(p))(X_{i})\right) \notag\\
&+\frac{2}{N}\sum_{i=1}^{N}(\nabla^{2}\mathscr{K}(p(X_{i}))(P_{i}-p(X_{i}))-(\nabla \mathscr{K}(P_{i})-\nabla \mathscr{K}(p(X_{i}))))\cdot \nabla V\ast \rho(X_{i}) \notag\\
&\coloneqq I_{1}+I_{2}+I_{3}. \label{I1I3 DEF} 
\end{align}
\qed

\vspace{0.4 cm}

\textbf{Proof of \eqref{Gronwall est statement}.} To prove \eqref{Gronwall est statement} we estimate separately each one of the $I_{k}$ (as defined in \eqref{I1I3 DEF}). By Proposition \ref{commutator estimate} there is a constant $C=C(\left\Vert D_{x}p\right\Vert_{\infty},\left\Vert \rho\right\Vert_{\infty})>0$ such that   
\begin{align}
I_{1}\leq C\left(\mathcal{V}_{N}(t)+\frac{\log N}{N}\mathbf{1}_{d=2}+\frac{1}{N^{\frac{2}{d}}}\mathbf{1}_{d\geq3}\right). 
\label{Estimate I1 final}
\end{align}
To estimate $I_{2}$ we apply inequality \eqref{first preliminary ine} in Lemma \ref{elementary ine lemma} to infer 
\begin{align*}
I_{2}\leq \frac{2C\left\Vert D_{x}p\right\Vert_{\infty}}{N}\sum_{i=1}^{N}\left\vert v(P_{i})-v(p(X_{i}))\right\vert \left\vert P_{i}-p(X_{i})\right\vert\leq \frac{C}{N}\sum_{i=1}^{N}\frac{\left\vert P_{i}-p(X_{i})\right\vert^{2} }{1+\left\vert P_{i}-p(X_{i})\right\vert}.        
\end{align*}
In view of Lemma \ref{coercivity positivity of eta} it follows that 
\begin{align}
I_{2}\leq \frac{C}{N}\sum_{i=1}^{N}\eta(P_{i}\vert p(X_{i})). \label{I2 mean field est}     
\end{align}
To estimate $I_{3}$, we invoke Lemma \ref{elementary ine lemma} to deduce that 
\begin{align*}
I_{3}&\leq \frac{2\left\Vert \nabla V\ast \rho\right\Vert_{\infty} }{N}\sum_{i=1}^{N}\left\vert \nabla^{2} \mathscr{K}(p(X_{i}))(P_{i}-p(X_{i}))-(\nabla \mathscr{K}(P_{i})-\mathscr{K}(p(X_{i})))\right\vert\\
&\leq \frac{2C\left\Vert \nabla V\right\Vert_{a'} \left\Vert \rho\right\Vert_{L^{\infty}L^{a}
}}{N}\sum_{i=1}^{N}\frac{\left\vert P_{i}-p(X_{i})\right\vert^{2} }{1+\left\vert P_{i}-p(X_{i})\right\vert }.     
\end{align*}
Hence, by means of Lemma \ref{coercivity positivity of eta} we infer that 
\begin{align}
I_{3}\leq \frac{C}{N}\sum_{i=1}^{N}\eta(P_{i}\vert p(X_{i})). \label{I3 mean field est}    
\end{align}
Combining \eqref{Estimate I1 final}-\eqref{I3 mean field est} we conclude that 
\begin{align*}
\frac{\dd}{\dd t}\mathcal{E}_{N}(t)\leq C(\mathcal{E}_{N}(t)+o_{N}(1)).    
\end{align*}
Thanks to Gr\"onwall's lemma we establish \eqref{Gronwall est statement}. 
\qed 

\vspace{0.4 cm}

\textbf{Proof of \eqref{convergence as N infty statement}.} That
\begin{align}
\underset{t\in [0,T]}{\sup}W_{1}(\rho_{N}(t,\cdot),\rho(t,\cdot))\underset{N\rightarrow \infty}{\rightarrow} 0  \label{Wasserstein ditance between rhoN and rho}
\end{align}
follows directly from Proposition \ref{coercivity inequality} and the estimate \eqref{Gronwall est statement}. We proceed by showing that $$\underset{t\in [0,T]}{\sup}\left\Vert (q_{N}-\rho p)(t,\cdot)\right\Vert_{W^{-1,\infty}}\underset{N\rightarrow \infty}{\rightarrow} 0.$$ Given $b\in W^{1,\infty}(\mathbb{T}^{d};\mathbb{R}^{d})$ with $\left\Vert b\right\Vert_{W^{1,\infty}}\leq 1$ we have  
\begin{align*}
\int_{\mathbb{T}^{d}} b(x)\cdot (q_{N}-\rho p)(t,\dd x)&=\int_{\mathbb{T}^{d}}b(x)\cdot (q_{N}-\rho_{N}p)(t,\dd x) +\int_{\mathbb{T}^{d}}p\cdot b(x)(\rho_{N}-\rho)(t,\dd x)\\
&\coloneqq \mathcal{T}_{1,N}+\mathcal{T}_{2,N}.     
\end{align*}
Fix $\lambda_{N}>1$ to be chosen later. The term $\mathcal{T}_{1,N}$ is recast as 
\begin{align*}
\mathcal{T}_{1,N}&=\frac{1}{N}\sum_{i=1}^{N}(P_{i}-p(X_{i}))\cdot b(X_{i})\\
&=\frac{1}{N}\sum_{\left\vert P_{i}-p(X_{i})\right\vert\leq \frac{1}{\lambda_{N} -1}}(P_{i}-p(X_{i}))\cdot b(X_{i})+\frac{1}{N}\sum_{\left\vert P_{i}-p(X_{i})\right\vert\geq \frac{1}{\lambda_{N} -1}}(P_{i}-p(X_{i}))\cdot b(X_{i})\\
&\coloneqq  \Sigma_{1}+\Sigma_{2}.
\end{align*}
Note that as long as $\left\vert P_{i}-p(X_{i})\right\vert\geq \frac{1}{\lambda_{N} -1}$ we have 
\begin{align*}
\left\vert P_{i}-p(X_{i})\right\vert\leq \frac{\lambda_{N} \left\vert P_{i}-p(X_{i})\right\vert^{2}}{1+\left\vert P_{i}-p(X_{i})\right\vert}. 
\end{align*}
Therefore, by means of  Lemma \ref{coercivity positivity of eta} we obtain 
\begin{align*}
\Sigma_{2}&\leq \frac{\left\Vert b\right\Vert_{\infty} }{N}\sum_{\left\vert P_{i}-p(X_{i})\right\vert\geq \frac{1}{\lambda_{N}-1}}\left\vert P_{i}-p(X_{i})\right\vert\\
&\leq \frac{C}{N}\sum_{i=1}^{N}\frac{\lambda_{N} \left\vert P_{i}-p(X_{i})\right\vert^{2}}{1+\left\vert P_{i}-p(X_{i})\right\vert}\leq C\lambda_{N}\mathcal{K}_{N}(t)\leq C\lambda_{N}\mathcal{E}_{N}(t). 
\end{align*}
Let $o_{N}(1)$ be defined as in \eqref{Gronwall est statement} and choose $\lambda_{N}=\max\{\frac{1}{\sqrt{\mathcal{E}_{N}(0)}},\frac{1}{\sqrt{o_{N}(1)}}\}$. By \eqref{Gronwall est statement} we have $\underset{t\in [0,T]}{\sup}\mathcal{E}_{N}(t)\leq Ce^{CT}(\mathcal{E}_{N}(0)+To_{N}(1))$ and so we obtain
\begin{align*}
\Sigma_{2}\leq Ce^{CT}\left(\sqrt{\mathcal{E}_{N}(0)}+T\sqrt{o_{N}(1)}\right)\underset{N\rightarrow \infty}{\rightarrow}0.    
\end{align*}
Moreover we estimate 
\begin{align*}
\Sigma_{1}\leq \frac{\left\Vert b\right\Vert_{\infty}}{N}\sum_{\left\vert P_{i}-p(X_{i})\right\vert\leq \frac{1}{\lambda_{N}-1}}\left\vert P_{i}-p(X_{i})\right\vert\leq \frac{1}{(\lambda_{N}-1)N}\sum_{i=1}^{N}1=\frac{1}{\lambda_{N} -1}\underset{N\rightarrow \infty}{\rightarrow}0.      
\end{align*}
Thus, we infer that  
\begin{align}
\underset{t\in [0,T]}{\sup}\mathcal{T}_{1,N}(t)\underset{N\rightarrow \infty}{\rightarrow} 0. \label{T1 vanishing} \end{align}
As for $\mathcal{T}_{2,N}$, in view of \eqref{Wasserstein ditance between rhoN and rho} we have 
\begin{align}
\underset{t\in [0,T]}{\sup}\mathcal{T}_{2,N}(t)&\leq \left\Vert \nabla (p\cdot b)\right\Vert_{\infty}W_{1}(\rho_{N}(t,\cdot),\rho(t,\cdot)) \notag\\
&\leq \left\Vert p\right\Vert_{L^{\infty}W^{1,\infty}}\underset{t\in [0,T]}{\sup}W_{1}(\rho_{N}(t,\cdot),\rho(t,\cdot))\underset{N\rightarrow \infty}{\rightarrow} 0. \label{T2 vanishing}
\end{align}
To conclude, owing to \eqref{T1 vanishing}-\eqref{T2 vanishing} we deduce that  $$\underset{t\in [0,T]}{\sup}\left\Vert (q_{N}-\rho p)(t,\cdot)\right\Vert_{W^{-1,\infty}}\underset{N\rightarrow \infty}{\rightarrow} 0$$. 
\qed
\begin{rem}
The statistical relevance of initial data for the non-relativistic renormalized modulated energy $H_{N}$ 
(as defined in \eqref{nonrel renormalized modulated energy}) has been already demonstrated in \cite{han2021newton} by exploiting the large deviation estimates in \cite{fournier2015rate}. The statistical relevance for the relativistic renormalized modulated energy $\mathcal{E}_{N}$ follows by exactly the same argument.
\end{rem}
\begin{rem}
The existence of a globally well defined flow for \eqref{Relativistic N body dynamics} follows by observing separation of particles and does not necessitate any significant modifications in comparison to the non-relativistic settings (which have been considered for instance in \cite{porat2026supercritical}).       
\end{rem}
\noindent{\bf Acknowledgments.}
This work was prepared during the author's stay at the University of Texas at Austin. The author is indebted to the University of Texas at Austin for financial support.

\vspace{0.4 cm}

\noindent{\bf Conflict of Interest:} The author declares that he has no conflict of interest.
The author also declares that this manuscript has not been previously published,
and will not be submitted elsewhere before your decision.

\bigskip
\noindent{\bf Data availability:} Data sharing is not applicable to this article as no datasets were generated or analyzed during the current study.

\bigskip

\noindent{\bf Declaration on the Use of Generative AI.} The author acknowledges the use of ChatGPT as an auxiliary tool in exploring mathematical ideas and checking certain intermediate arguments and computations. All AI-generated suggestions were independently assessed and verified by the author. The mathematical results, proofs, organization, and conclusions of the paper are the author’s own, and the manuscript was written in its entirety by the author, who assumes full responsibility for its content.
\bibliographystyle{abbrv}
\bibliography{references}
\end{document}